\newtheorem{tm}{tm}[section]
\newtheorem{theorem}[tm]{Theorem}
\newtheorem{corollary}[tm]{Corollary}
\newtheorem{proposition}[tm]{Proposition}
\newtheorem{example}[tm]{Example}
\newcommand{\process}[1]{\{#1_t\}_{t\geq0}}
\newcommand {\R} {\ensuremath{\mathbb{R}}}
\newcommand {\N} {\ensuremath{\mathbb{N}}}
\newcommand {\CC} {\ensuremath{\mathbb{C}}}
\numberwithin{equation}{section}
\def\be{\begin{equation}}
\def\ee{\end{equation}}
\begin{document}

\title{On Recurrence and Transience of Two-Dimensional L\'evy and L\'evy-Type Processes}
 \author{Nikola Sandri\'{c}\\
Institut f\"ur Mathematische Stochastik,\\ Fachrichtung Mathematik, Technische Universit\"at Dresden, Dresden, Germany\\
and\\
Department of Mathematics\\
         Faculty of Civil Engineering, University of Zagreb, Zagreb,
         Croatia \\
        Email: nsandric@grad.hr }

 \maketitle
\begin{center}
{
\medskip

} \end{center}

\begin{abstract}
In this paper, we study recurrence and transience
   of L\'evy-type processes, that is,   Feller processes associated with
pseudo-differential operators. Since  the recurrence property of L\'evy-type processes in dimensions greater than two is vacuous and  the recurrence and transience of one-dimensional L\'evy-type processes have been very well investigated, in this paper we are focused on the two-dimensional case only.
In particular, we study perturbations of two-dimensional L\'evy-type processes
 which do not affect their recurrence and transience
properties,  we derive sufficient conditions for their recurrence
and transience in terms of the corresponding L\'evy measure and we provide
 some comparison conditions for the recurrence and transience
also in terms of the L\'evy measures.

\end{abstract}
{\small \textbf{AMS 2010 Mathematics Subject Classification:} 60J75,
60J25, 60G17 }
\smallskip

\noindent {\small \textbf{Keywords and phrases:}
  L\'evy measure, L\'evy process, L\'evy-type process,
recurrence, symbol,
 transience}

%
%
% ------------------------------ INTRODUCTION ---------------------------------------
%
%

%\thispagestyle{empty}

\section{Introduction}\label{s1}

Let $(\{L_t\}_{t\geq0},\{\mathbb{P}^{x}\}_{x\in\R^{d}})$ be a $d$-dimensional, $d\geq1$, L\'evy
process.  The process $\{L_t\}_{t\geq0}$ is said
to be \emph{recurrent} if
$$\int_0^{\infty}\mathbb{P}^{x}\left(L_t\in B_r(x)\right)dt=\infty\quad\textrm{for all}\ x\in\R^{d}\ \textrm{and all}\ r>0,$$
and
 \emph{transient} if
$$\int_0^{\infty}\mathbb{P}^{x}\left(L_t\in B_r(x)\right)dt<\infty\quad\textrm{for all}\ x\in\R^{d}\ \textrm{and all}\ r>0.$$
Here, $B_r(x)$ denotes the open ball of radius $r>0$ around $x\in\R^{d}$.
It is well known that every L\'{e}vy process is either recurrent or transient (see \cite[Theorem 35.3]{sato-book}).
However, the above definitions (characterizations) of the recurrence and transience properties are not  practical in most cases.
Due to the stationarity and independence of the increments, every L\'evy process $\{L_t\}_{t\geq0}$ can be uniquely and completely characterized through  the characteristic function of a single random variable $L_t$, $t>0$, that is, by the famous \emph{L\'evy-Khintchine
formula} we have
$$\mathbb{E}^{x}[\exp\{i\langle\xi,L_t-x\rangle\}]=\exp\{-tq(\xi)\},\quad  t\geq0,\ x\in\R^{d},$$
where the function $q:\R^{d}\longrightarrow\CC$ is called the \emph{characteristic exponent} of the process $\{L_t\}_{t\geq0}$ and it enjoys the following L\'evy-Khintchine representation
$$q(\xi)=i\langle\xi,b\rangle+\frac{1}{2}\langle\xi,C\xi\rangle+\int_{\R^{d}}\left(1-\exp\{i\langle\xi,y\rangle\}+i\langle\xi,y\rangle1_{B_1(0)}(y)\right)\nu(dy),\quad \xi\in\R^{d}.$$
Here,  $b\in\R^{d}$, $C$ is a symmetric non-negative definite $d\times d$ matrix and $\nu(dy)$ is a $\sigma$-finite Borel measure on $\R^{d}$, the so-called \emph{L\'evy measure}, satisfying $\int_{\R^{d}}\min\{1,|y|^{2}\}\nu(y)<\infty$.
Now, by having this nice analytical description (characterization) of L\'{e}vy processes,
 a more
operable
 characterization of the recurrence and transience properties  has been given by the
 well-known
\emph{Chung-Fuchs criterion}. A L\'evy process $\{L_t\}_{t\geq0}$ is
recurrent if, and only if,
$$\int_{B_r(0)}{\rm Re}\left(\frac{1}{q(\xi)}\right)d\xi=\infty
\quad \textrm{for some (all)}\ r>0,$$ (see \cite[Corollary 37.6 and
Remark 37.7]{sato-book}). As one of the  direct consequences of this criterion we get that in  dimensions greater than two every L\'evy process is  transient (see \cite[Theorem 37.8]{sato-book}).

The notion of  recurrence and transience can  also be defined for a broader class of processes.
Let $(\process{M},\{\mathbb{P}^{x}\}_{x\in\R^{d}})$ be a  $d$-dimensional, $d\geq1$, Markov process with c\`adl\`ag sample paths. The
process $\process{M}$ is called
\begin{enumerate}
  \item [(i)] $\varphi$\emph{-irreducible} if there exists a $\sigma$-finite measure $\varphi(dy)$ on
$\mathcal{B}(\R^{d})$ such that whenever $\varphi(B)>0$ we have
$\int_0^{\infty}\mathbb{P}^{x}(M_t\in B)dt>0$ for all $x\in\R^{d}$.
  \item [(ii)] \emph{recurrent} if it is
                      $\varphi$-irreducible and if $\varphi(B)>0$ implies $\int_{0}^{\infty}\mathbb{P}^{x}(M_t\in B)dt=\infty$ for all
                      $x\in\R^{d}$.
 \item [(iii)] \emph{transient} if it is $\varphi$-irreducible
                       and if there exists a countable
                      covering of $\R^{d}$ with  sets
$\{B_j\}_{j\in\N}\subseteq\mathcal{B}(\R^{d})$, such that for each
$j\in\N$ there is a finite constant $c_j\geq0$ such that
$\int_0^{\infty}\mathbb{P}^{x}(M_t\in B_j)dt\leq c_j$ holds for all
$x\in\R^{d}$.
\end{enumerate}
Recall that, as in the L\'evy process case, every $\varphi$-irreducible Markov
process is either recurrent or transient  (see \cite[Theorem
2.3]{tweedie-mproc}). Now, let $(\process{F},\{\mathbb{P}^{x}\}_{x\in\R^{d}})$ be a $d$-dimensional, $d\geq 1$, conservative (that is, $\mathbb{P}^{x}(F_t\in\R^{d})=1$ for all $t\geq0$ and all $x\in\R^{d}$) Feller process
with infinitesimal generator $(\mathcal{A},\mathcal{D}_{\mathcal{A}})$. If  the set of smooth functions
with compact support $C_c^{\infty}(\R^{d})$ is contained in $\mathcal{D}_{\mathcal{A}}$, then the operator $\mathcal{A}|_{C_c^{\infty}(\R^{d})}$ has the following representation $$\mathcal{A}|_{C_c^{\infty}(\R^{d})}f(x) = -\int_{\R^{d}}q(x,\xi)\exp\{i\langle \xi,x\rangle\}
\hat{f}(\xi) d\xi,$$
 where $\hat{f}(\xi)$ denotes the Fourier transform of $f(x)$ and the function $q:\R^{d}\times\R^{d}\longrightarrow\CC$ is called the \emph{symbol} of the operator $\mathcal{A}|_{C_c^{\infty}(\R^{d})}$  (process $\{F_t\}_{t\geq0}$) and, for each fixed $x\in\R^{d}$, it is the characteristic exponent of some L\'evy process. In particular, it enjoys the ($x$-dependent)  L\'evy-Khintchine representation
$$q(x,\xi) =i\langle \xi,b(x)\rangle + \frac{1}{2}\langle\xi,C(x)\xi\rangle +
\int_{\R^{d}}\left(1-\exp\{i\langle\xi,y\rangle\}+i\langle\xi,y\rangle1_{B_1(0)}(y)\right)\nu(x,dy).$$ Accordingly, a Feller process satisfying the above properties is called  a \emph{L\'evy-type process}  (see Section \ref{s2} for details). Observe that this class of processes contains the class of L\'evy processes.

In
the sequel, we consider  only the so-called \emph{open-set irreducible}
L\'evy-type processes, that is,
L\'evy-type processes whose  irreducibility measure is fully supported.
An example of such measure is the Lebesgue measure, which we denote by $\lambda(dy)$. Clearly,  a L\'evy-type process $\process{F}$ will be
$\lambda$-irreducible if $\mathbb{P}^{x}(F_t\in B)>0$ for all
 $x\in\R^{d}$ and all $t>0$ whenever $\lambda(B)>0.$ In particular, the
process $\process{F}$ will be $\lambda$-irreducible if the
transition kernel $\mathbb{P}^{x}(F_t\in dy)$, $t>0$, $x\in\R^{d}$,  possesses a strictly positive transition density
function. Let us remark that the $\lambda$-irreducibility of L\'evy-type processes is a very well-studied topic in the literature. We refer the readers to  \cite{Sheu-1991} and \cite{Stramer-Tweedie-1997} for the case of elliptic diffusion processes, to \cite{Kolokoltsov-2000}  for the case of a class of pure jump L\'evy-type processes (the so-called stable-like processes),
to   \cite{Bass-Cranston-1986},  \cite{Ishikawa-2001}, \cite{Knopova-Kulik-2014}, \cite{Kulik-2007}, \cite{Kwoon-Lee-1999} and \cite{Picard-1996, Picard-Erratum-2010} for the case of a class of L\'evy-type processes obtained as a solution of certain jump-type stochastic differential equations and
\cite{Knopova-Schilling-2012}, \cite{Knopova-Schilling-2013} and \cite{Pang-Sandric-2015}    for the case of general L\'evy-type processes.

Now, if
 $\process{F}$ is an open-set irreducible L\'evy-type process with symbol $q(x,\xi)$,
then, under certain additional regularity conditions on the symbol  (see conditions \textbf{(C2)} and \textbf{(C3)} in Section \ref{s2}), in \cite{sandric-TAMS} and \cite{rene-wang-feller} it has been proven that $\process{F}$ is recurrent if
 $$\liminf_{\alpha\longrightarrow0}\int_{\R^{d}}\left(\int_{\frac{\ln 2}{4\sup_{x\in\R^{d}}|q(x,\xi)|}}^{\infty}e^{-\alpha t}\,{\rm Re}\,\mathbb{E}^{0}[\it{e}^{\it{i}\langle\xi, F_t\rangle}]dt\right)\frac{\sin^{2}\left(\frac{a\xi_1}{2}\right)}{
\xi_1^{2}}\cdots\frac{\sin^{2}\left(\frac{a\xi_d}{2}\right)}{
\xi_d^{2}}d\xi>-\infty$$ for all $a>0$ small enough (see \cite[Proposition 2.5]{sandric-transience} for further discussion on this condition)
 and
\begin{equation}\label{eq1.1}\int_{B_r(0)}\frac{d\xi}{\sup_{x\in\R^{d}}|q(x,\xi)|}=\infty\quad \textrm{for some}\ r>0,\end{equation} and  it is transient if $\sup_{x\in\R^{d}}|{\rm Im}\,q(x,\xi)|\leq c\inf_{x\in\R^{d}}{\rm Re}\,q(x,\xi)$ for some $0\leq c<1$ and all $\xi\in\R^{d}$ and\begin{equation}\label{eq1.2}\int_{B_r(0)}\frac{d\xi}{\inf_{x\in\R^{d}}{\rm Re}\,q(x,\xi)}<\infty\quad \textrm{for some}\ r>0.\end{equation}
Again, directly from the above Chung-Fuchs type conditions, in \cite[Theorem 2.8]{sandric-TAMS} it has been shown that  in  dimensions greater than two  every L\'evy-type process (satisfying the above mentioned conditions) is transient.

Due to this, the problem of   recurrence and transience of L\'evy and L\'evy-type processes reduces to the dimensions one and two. In the one-dimensional case this problem has been  extensively studied in \cite{sandric-TAMS} and \cite{sato-book}. The main   questions regarding the recurrence and transience addressed in these references consider a connection with Pruitt indices, problem of
perturbations of these processes, conditions for the recurrence and transience in terms of
the underlying L\'evy measure and  problem of comparison for
the recurrence and transience also in terms of the  L\'evy measures.

In this work, we are focused on the same questions as in the one-dimensional case. More precisely,
 in Theorem
\ref{tm3.2}, we prove that if $\process{F}$ and
$\process{\bar{F}}$ are two-dimensional L\'evy-type processes with radial (in the co-variable) symbols $q(x,\xi)$ and $\bar{q}(x,\xi)$ and   L\'evy measures
$\nu(x,dy)$ and $\bar{\nu}(x,dy)$, respectively,  then $\process{F}$  and $\process{\bar{F}}$ are recurrent or
transient at the same time if there is a rotation of the plane (orthogonal matrix) $R$ such that
$$\sup_{x\in\R^{2}}\int_{\R^{2}}|y|^{2}|\nu(x,dy)-\bar{\nu}(Rx,dy)|<\infty.$$ Here,
$|\mu(dy)|$
denotes the
total variation measure of the signed measure $\mu(dy)$.
Note that this result  automatically implies
 that the  recurrence and transience of L\'evy-type processes depend only on the nature of big jumps of the process.
Further, since in general it is not always easy to check the
Chung-Fuchs type conditions in \eqref{eq1.1} and \eqref{eq1.2},
 in
Theorem \ref{tm4} we give necessary and
sufficient conditions for the recurrence and transience in terms of
the L\'evy measure. More precisely, under the assumption that the corresponding symbol is radial in the co-variable and certain additional regularity conditions,  we prove that \eqref{eq1.1} is equivalent to
 $$\int_r^{\infty}\left(\rho\sup_{x\in\R^{2}}\int_0^{\rho}u\,\nu(x,B_u^{c}(0))du\right)^{-1}d\rho=\infty\quad\textrm{for some}\ r>0,$$
and
\eqref{eq1.2} is equivalent to  $$\int_r^{\infty}\left(\rho\inf_{x\in\R^{2}}\int_0^{\rho}u\,\nu(x,B_u^{c}(0))du\right)^{-1}d\rho<\infty\quad\textrm{for some}\ r>0.$$
Finally, in Theorem \ref{tm3.10}, we give some comparison conditions for the recurrence
and transience  by comparing  ``tails" of the L\'evy measures, that is, we prove that the recurrence of the process with the L\'evy measure with ``bigger tail" implies the recurrence of the one with ``smaller tail". Similarly, we prove that the transience of the process with the L\'evy measure with ``smaller tail" implies the transience of the process with ``bigger tail".

The remaining part of this paper is organized as follows. In Section \ref{s2}, we give some preliminaries on L\'evy and L\'evy-type processes and in Section \ref{s3} we discuss   perturbations of these processes. In Section \ref{s4}  we derive sufficient conditions for the recurrence and transience in terms of
the L\'evy measure and, finally, in Section \ref{s5} we give some comparison conditions for the recurrence and transience properties
also in terms of the L\'evy measures.

\section{Preliminaries on L\'evy and L\'evy-Type Processes}\label{s2}

 Let
$(\Omega,\mathcal{F},\{\mathbb{P}^{x}\}_{x\in\R^{d}},\process{\mathcal{F}},\process{\theta},\process{M})$, denoted by $\process{M}$
in the sequel, be a Markov process with  state space
$(\R^{d},\mathcal{B}(\R^{d}))$, where $d\geq1$ and
$\mathcal{B}(\R^{d})$ denotes the Borel $\sigma$-algebra on
$\R^{d}$. A family of linear operators $\process{P}$ on
$B_b(\R^{d})$ (the space of bounded and Borel measurable functions),
defined by $$P_tf(x):= \mathbb{E}^{x}[f(M_t)],\quad t\geq0,\
x\in\R^{d},\ f\in B_b(\R^{d}),$$ is associated with the process
$\process{M}$. Since $\process{M}$ is a Markov process, the family
$\process{P}$ forms a \emph{semigroup} of linear operators on the
Banach space $(B_b(\R^{d}),||\cdot||_\infty)$, that is, $P_s\circ
P_t=P_{s+t}$ and $P_0=I$ for all $s,t\geq0$. Here,
$||\cdot||_\infty$ denotes the supremum norm on the space
$B_b(\R^{d})$. Moreover, the semigroup $\process{P}$ is
\emph{contractive}, that is, $||P_tf||_{\infty}\leq||f||_{\infty}$
for all $t\geq0$ and all $f\in B_b(\R^{d})$, and \emph{positivity
preserving}, that is, $P_tf\geq 0$ for all $t\geq0$ and all $f\in
B_b(\R^{d})$ satisfying $f\geq0$. The \emph{infinitesimal generator}
$(\mathcal{A}_{b},\mathcal{D}_{\mathcal{A}_{b}})$ of the semigroup
$\process{P}$ (or of the process $\process{M}$) is a linear operator
$\mathcal{A}_{b}:\mathcal{D}_{\mathcal{A}_{b}}\longrightarrow B_b(\R^{d})$
defined by
$$\mathcal{A}_{b}f:=
  \lim_{t\longrightarrow0}\frac{P_tf-f}{t},\quad f\in\mathcal{D}_{\mathcal{A}_{b}}:=\left\{f\in B_b(\R^{d}):
\lim_{t\longrightarrow0}\frac{P_t f-f}{t} \ \textrm{exists in}\
||\cdot||_\infty\right\}.
$$ We call $(\mathcal{A}_{b},\mathcal{D}_{\mathcal{A}_{b}})$ the \emph{$B_b$-generator} for short.
A Markov process $\process{M}$ is said to be a \emph{Feller process}
if its corresponding  semigroup $\process{P}$ forms a \emph{Feller
semigroup}. This means that the family $\process{P}$ is a semigroup
of linear operators on the Banach space
$(C_\infty(\R^{d}),||\cdot||_{\infty})$  and it is \emph{strongly
continuous}, that is,
  $$\lim_{t\longrightarrow0}||P_tf-f||_{\infty}=0,\quad f\in
  C_\infty(\R^{d}).$$ Here, $C_\infty(\R^{d})$ denotes
the space of continuous functions vanishing at infinity.  Note that
every Feller semigroup $\process{P}$  can be uniquely extended to
$B_b(\R^{d})$ (see \cite[Section 3]{rene-conserv}). For notational
simplicity, we denote this extension again by $\process{P}$. Also,
let us remark that every Feller process possesses the strong Markov
property and has c\`adl\`ag sample paths (see  \cite[Theorems 3.4.19 and
3.5.14]{jacobIII}).   Further,
in the case of Feller processes, we call
$(\mathcal{A},\mathcal{D}_{\mathcal{A}}):=(\mathcal{A}_{b},\mathcal{D}_{\mathcal{A}_{b}}\cap
C_\infty(\R^{d}))$ the \emph{Feller generator} for short. Note
that, in this case, $\mathcal{D}_{\mathcal{A}}\subseteq
C_\infty(\R^{d})$ and
$\mathcal{A}(\mathcal{D}_{\mathcal{A}})\subseteq
C_\infty(\R^{d})$. If  the Feller
generator
$(\mathcal{A},\mathcal{D}_{\mathcal{A}})$ of a
Feller process $\process{M}$ satisfies:
    \begin{description}
      \item[(\textbf{C1})]
      $C_c^{\infty}(\R^{d})\subseteq\mathcal{D}_{\mathcal{A}}$,
    \end{description}
 then, according to \cite[Theorem 3.4]{courrege-symbol},
$\mathcal{A}|_{C_c^{\infty}(\R^{d})}$ is a \emph{pseudo-differential
operator}, that is, it can be written in the form
\begin{equation}\label{eq2.1}\mathcal{A}|_{C_c^{\infty}(\R^{d})}f(x) = -\int_{\R^{d}}q(x,\xi)\exp\{i\langle \xi,x\rangle\}
\hat{f}(\xi) d\xi,\end{equation}
 where $\hat{f}(\xi):=
(2\pi)^{-d} \int_{\R^{d}} e^{-i\langle\xi,x\rangle} f(x) dx$ denotes
the Fourier transform of the function $f(x)$.
 The function $q :
\R^{d}\times \R^{d}\longrightarrow \CC$ is called  the \emph{symbol}
of the pseudo-differential operator. It is measurable and locally
bounded in $(x,\xi)$ and continuous and negative definite as a
function of $\xi$. Hence, by \cite[Theorem 3.7.7]{jacobI}, the
function $\xi\longmapsto q(x,\xi)$ has for each $x\in\R^{d}$ the
 L\'{e}vy-Khintchine representation
 \begin{equation}\label{eq2.2}q(x,\xi) =a(x)-
i\langle \xi,b(x)\rangle + \frac{1}{2}\langle\xi,C(x)\xi\rangle +\int_{\R^{d}}\left(1-\exp\{i\langle\xi,y\rangle\}+i\langle\xi,y\rangle1_{B_1(0)}(y)\right)\nu(x,dy),\end{equation}
 where $a(x)$ is a nonnegative Borel measurable function, $b(x)$ is
an $\R^{d}$-valued Borel measurable function,
$C(x):=(c_{ij}(x))_{1\leq i,j\leq d}$ is a symmetric non-negative
definite $d\times d$ matrix-valued Borel measurable function
 and $\nu(x,dy)$ is a Borel kernel on $\R^{d}\times
\mathcal{B}(\R^{d})$, called the \emph{L\'evy measure}, satisfying
$$\nu(x,\{0\})=0\quad \textrm{and} \quad \int_{\R^{d}}\min\{1,
|y|^{2}\}\nu(x,dy)<\infty,\quad x\in\R^{d}.$$ The quadruple
$(a(x),b(x),c(x),\nu(x,dy))$ is called the \emph{L\'{e}vy quadruple}
of the pseudo-differential operator
$\mathcal{A}|_{C_c^{\infty}(\R^{d})}$ (or of the symbol $q(x,\xi)$).
Let us remark that the local boundedness of $q(x,\xi)$  implies that for every compact set $K\subseteq\R^{d}$ there exists a finite constant $c_K>0$ such that
\begin{equation}\label{eq2.3}\sup_{x\in K}|q(x,\xi)|\leq c_K(1+|\xi|^{2}),\quad \xi\in\R^{d},\end{equation} (see \cite[Lemma 3.6.22]{jacobI}).
Due to \cite[Lemma 2.1 and Remark 2.2]{rene-holder},  \eqref{eq2.3} is equivalent to the local boundedness of the L\'evy quadruple, that is, for every compact set $K\subseteq\R^{d}$ we have $$\sup_{x\in K}a(x)+\sup_{x\in K}|b(x)|+\sup_{x\in K}|c(x)|+\sup_{x\in K}\int_{\R^{d}}\min\{1,y^{2}\}\nu(x,dy)<\infty.$$ According to the same reference, the global boundedness of the L\'evy quadruple is equivalent to
 \begin{description}
      \item[(\textbf{C2})] $||q(\cdot,\xi)||_\infty\leq c(1+|\xi|^{2})$  for some
  finite $c>0$ and all $\xi\in\R^{d}$.
       \end{description}
In the sequel, we also assume the following condition on the symbol
$q(x,\xi)$:
\begin{description}
  \item[(\textbf{C3})] $q(x,0)=a(x)=0$ for all $x\in\R^{d}.$
\end{description}
Let us remark that, according to \cite[Theorem 2.34]{rene-bjorn-jian},
condition (\textbf{C3}) is closely related to the conservativeness property of $\process{M}$. Namely, $\process{M}$ is \emph{conservative}, that is, $\mathbb{P}^{x}(M_t\in\R^{d})=1$ for all $t\geq0$ and all
$x\in\R^{d}$, if $q(x,\xi)$ satisfies (\textbf{C3}) and \begin{equation}\label{eq2.4}\liminf_{k\longrightarrow\infty}\sup_{|y-x|\leq k}\sup_{|\xi|\leq1/2k}|q(y,\xi)|<\infty,\quad x\in\R^{d}.\end{equation}  Moreover, due to \cite[Theorem 5.2]{rene-conserv}, under \textbf{(C2)},  (\textbf{C3}) is equivalent to
 the conservativeness property of the process $\process{M}$. Further,  note that in the case when the symbol $q(x,\xi)$ does not depend
on the variable $x\in\R^{d}$, $\process{M}$ becomes a \emph{L\'evy
process}, that is, a stochastic process   with stationary and
independent increments and c\`adl\`ag sample paths.
 Moreover, every L\'evy process is uniquely
and completely characterized through its corresponding symbol (see \cite[Theorems 7.10 and
8.1]{sato-book}). According to this, it is not hard to check that every L\'evy process satisfies conditions
(\textbf{C1}), (\textbf{C2}) and (\textbf{C3}) (see \cite[Theorem 31.5]{sato-book}). Thus, the class of processes we consider in this paper contains a subclass of L\'evy
processes. Let us also remark here that, unlike in the case of L\'evy processes, it is not possible
to associate a Feller process to every  symbol (see \cite{rene-bjorn-jian} for  details).
 Throughout this paper, the symbol $\process{F}$ denotes a Feller
process satisfying conditions (\textbf{C1}), (\textbf{C2}) and (\textbf{C3}). Such a
process is called a \emph{L\'evy-type process}. Also, a L\'evy process is denoted by $\process{L}$.
If  $\nu(x,dy)=0$ for all $x\in\R^{d}$, according to \cite[Theorem 2.44]{rene-bjorn-jian}, $\process{F}$ becomes an \emph{elliptic diffusion process}. Note that this definition agrees
with the standard definition of elliptic diffusion processes (see \cite{Rogers-Williams-Book-I-2000}).
 For more
on L\'evy and L\'evy-type processes  we refer the readers to the monographs
\cite{sato-book} and \cite{rene-bjorn-jian}.

\section{Perturbations of L\'evy and L\'evy-Type Processes}\label{s3}
In this section,
 we study perturbations of two-dimensional L\'evy and L\'evy-type processes
 which do not affect their recurrence and transience properties.
We start  with the following proposition which we will need in the sequel.
\begin{proposition}\label{p3.1} Let $\process{F}$  be
a $d$-dimensional L\'evy-type process with Feller generator $(\mathcal{A},\mathcal{D}_{\mathcal{A}})$, symbol $q(x,\xi)$ and  L\'evy triplet $(b(x),C(x),\nu(x,dy))$, and let $M$ be a regular $d\times d$ matrix. Then, the process  $\process{MF}$  is also
a $d$-dimensional L\'evy-type process which is determined by   symbol and L\'evy triplet of the form \begin{align}\label{eq3.1}q_{M}(x,\xi)&=q(M^{-1}x,M^{T}\xi),\nonumber\\ b_{M}(x)&=Mb(M^{-1}x)+\int_{\R^{d}}My\left(1_{B_1(0)}(y)-1_{B_1(0)}(My)\right)\nu(x,dy),\nonumber\\
C_{M}(x)&=MC(M^{-1}x)M^{T},\nonumber\\
\nu_{M}(x,dy)&=\nu(M^{-1}x,M^{-1}dy),\end{align} respectively. Here, $M^{T}$ denotes the transpose matrix of the matrix $M$. Further, if  $\process{F}$ is open-set irreducible, then $\process{MF}$ is also open-set irreducible.
\end{proposition}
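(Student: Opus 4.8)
The plan is to show that $\process{MF}$ is a L\'evy-type process by identifying its symbol directly from the symbol of $\process{F}$, and then to read off the L\'evy triplet from the resulting L\'evy--Khintchine representation. The key structural fact is that $M$ is a fixed linear bijection, so a great deal of the Feller/Markov machinery transfers mechanically under the map $x \mapsto Mx$.

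First I would establish that $\process{MF}$ is itself a Feller process satisfying \textbf{(C1)}. Writing $G_t := MF_t$, the semigroup of $\process{G}$ is $P^{G}_t f(x) = \mathbb{E}^{x/\dots}[f(MF_t)]$; more precisely, under $\mathbb{P}^{M^{-1}x}$ the process $MF_t$ starts at $x$, so I would define the law of $\process{G}$ started at $x$ to be the push-forward under $M$ of the law of $\process{F}$ started at $M^{-1}x$. Since $M$ is a homeomorphism of $\R^{d}$ fixing the point at infinity, composition with $M$ maps $C_\infty(\R^{d})$ bijectively onto itself and preserves the sup-norm, so the Feller property (strong continuity, contractivity, positivity) is inherited immediately. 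For \textbf{(C1)}, note that $f \in C_c^{\infty}(\R^{d})$ iff $f \circ M \in C_c^{\infty}(\R^{d})$, and the chain rule shows $C_c^{\infty}(\R^{d})$ lies in the domain of the generator of $\process{G}$.

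The main computation, and the heart of the argument, is to identify the symbol. Using the representation \eqref{eq2.1} together with the transformation rule for the Fourier transform under a linear change of variables, I would compute the generator of $\process{G}$ applied to $f \in C_c^{\infty}(\R^{d})$ by relating it to $\mathcal{A}$ applied to $f \circ M$ evaluated at $M^{-1}x$. Tracking how $\hat{f}(\xi)$ transforms under $f \mapsto f\circ M$ (which introduces a factor $|\det M|^{-1}$ and replaces $\xi$ by $M^{-T}\xi$) produces the claimed relation $q_{M}(x,\xi) = q(M^{-1}x, M^{T}\xi)$ after the change of variable in the $d\xi$ integral; the Jacobian factors cancel exactly, which is the point to verify carefully. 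This is the step I expect to be the main obstacle, since one must be meticulous with the placement of $M$ versus $M^{T}$ versus $M^{-1}$ and with the Jacobian bookkeeping.

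Having the symbol, I would substitute $\xi \mapsto M^{T}\xi$ into the L\'evy--Khintchine representation \eqref{eq2.2} of $q(M^{-1}x, \cdot)$ and match terms against the standard form \eqref{eq2.2} for $q_M(x,\xi)$. The Gaussian term $\tfrac12\langle M^{T}\xi, C(M^{-1}x) M^{T}\xi\rangle = \tfrac12\langle \xi, M C(M^{-1}x) M^{T}\xi\rangle$ yields $C_M(x)$, and performing the change of variables $y \mapsto M y$ in the integral against $\nu(M^{-1}x,dy)$ yields the push-forward kernel $\nu_M(x,dy) = \nu(M^{-1}x, M^{-1}dy)$. The drift $b_M(x)$ then appears as the correction needed because the truncation indicator $1_{B_1(0)}(y)$ is not invariant under $y \mapsto My$: the difference $1_{B_1(0)}(y) - 1_{B_1(0)}(My)$ against the transformed measure produces exactly the stated integral correction, which is finite by local boundedness of the L\'evy quadruple (equation \eqref{eq2.3} and the remark following it). Condition \textbf{(C2)} for $q_M$ follows since $|q_M(x,\xi)| = |q(M^{-1}x, M^{T}\xi)| \leq c(1 + |M^{T}\xi|^2) \leq c'(1+|\xi|^2)$, and \textbf{(C3)} is immediate from $q_M(x,0) = q(M^{-1}x, 0) = 0$. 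Finally, open-set irreducibility transfers because $M$ maps sets of positive Lebesgue measure to sets of positive Lebesgue measure, so if $\lambda(B) > 0$ then $\lambda(M^{-1}B) > 0$, and the irreducibility estimate for $\process{F}$ at the point $M^{-1}x$ against $M^{-1}B$ gives the desired positivity for $\process{MF}$.
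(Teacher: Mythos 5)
Your proposal is correct and follows essentially the same route as the paper: define the law of $\process{MF}$ started at $x$ as the push-forward under $M$ of the law of $\process{F}$ started at $M^{-1}x$, verify the Feller property and \textbf{(C1)} via $\mathcal{A}_{M}f(x)=\mathcal{A}(f\circ M)(M^{-1}x)$, identify the symbol through the Fourier-transform/Jacobian computation yielding $q_M(x,\xi)=q(M^{-1}x,M^{T}\xi)$, and read the triplet off the L\'evy--Khintchine representation. Your explicit discussion of the drift correction coming from the non-invariance of the truncation indicator is a detail the paper leaves implicit, but it is the same argument.
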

\begin{proof}
First, by a straightforward inspection, it is easy to see that $\process{MF}$ is a Feller process with respect to $\mathbb{P}_M^{x}(MF_t\in dy):=\mathbb{P}^{M^{-1}x}(F_t\in M^{-1}dy),$ $t\geq0$, $x\in\R^{d}$. Next, let us show that $\process{MF}$ satisfies \textbf{(C1)} and that its symbol and L\'evy triplet are given by the relations in \eqref{eq3.1}. Since,
$$\int_{\R^{d}}f(y)\mathbb{P}_M^{x}(MF_t\in dy)-f(x)=\int_{\R^{d}}f\circ M(y)\mathbb{P}^{M^{-1}x}(F_t\in dy)-f\circ M(M^{-1}x),\quad x\in\R^{d},\ f\in B_b(\R^{d}),$$
 we easily conclude that $C_c^{\infty}(\R^{d})\subseteq \mathcal{D}_{\mathcal{A}_{M}}$ and $\mathcal{A}_{M}f(x)=\mathcal{A}f\circ M(M^{-1}x)$ for $x\in\R^{d}$ and $f\in C_c^{\infty}(\R^{d})$,  where $(\mathcal{A}_{M},\mathcal{D}_{\mathcal{A}_{M}})$ denotes the Feller generator of $\process{MF}$. Now, according to \eqref{eq2.1}, for $x\in\R^{d}$ and $f\in C_c^{\infty}(\R^{d})$,
\begin{align*}
\mathcal{A}_{M}f(x)&=-\int_{\R^{d}}q_{M}(x,\xi)e^{i\langle\xi,x\rangle}\hat{f}(\xi)d\xi\\
&=-\int_{\R^{d}}q(M^{-1}x,\xi)e^{i\langle\xi,M^{-1}x\rangle}\widehat{f\circ M}(\xi)d\xi\\
&=-|\det M^{-1}|\int_{\R^{d}}q(M^{-1}x,\xi)e^{i\langle\xi,M^{-1}x\rangle}\hat{f}\left(\left(M^{-1}\right)^{T}\xi\right)d\xi\\
&=-\int_{\R^{d}}q(M^{-1}x,M^{T}\xi)e^{i\langle\xi,x\rangle}\hat{f}\left(\xi\right)d\xi,
\end{align*}
which together with \eqref{eq2.2} proves the assertion. Finally, it is straightforward to see that $\process{MF}$ satisfies the conditions in \textbf{(C2)} and \textbf{(C3)} and that it is open-set irreducible.
\end{proof}

Recall that a \emph{rotation of the plane} for an angle $\varphi\in[0,2\pi)$  is a linear mapping $R_\varphi:\R^{2}\longrightarrow\R^{2}$ represented by the  matrix $$R_{\varphi}=\left[
                                               \begin{array}{cc}
                                                 \cos \varphi & -\sin \varphi \\
                                                 \sin \varphi & \cos \varphi \\
                                               \end{array}
                                             \right].$$ Clearly, $R_\varphi^{T}=R_\varphi^{-1}=R_{2\pi-\varphi}$ and $\det R_\varphi=1$ for all $\varphi\in[0,2\pi).$ A function $f:\R^{2}\longrightarrow\R$ is said to be \emph{radial} if, for every $x\in\R^{2}$, $f(R_\varphi x)=f(x)$ for all $\varphi\in[0,2\pi)$.
In the rest of this section we will always  assume that the symbol $q(x,\xi)$ of a two-dimensional L\'evy-type process $\process{F}$ is radial in the co-variable.
In particular, if $(b(x), C(x),\nu(x,dy))$ is the corresponding L\'evy triplet, then, due to \cite[Exercise 18.3]{sato-book},
\begin{itemize}
  \item [(i)]$b(x)=0$ for all $x\in\R^{2}$;
  \item [(ii)]$C(x)=c(x)I$ for some Borel measurable function $c:\R^{2}\longrightarrow[0,\infty)$, where $I$ is the $2\times2$ identity matrix;
  \item [(iii)] $\nu(x,dy)=\nu(x,R_\varphi dy)$ for all $x\in\R^{d}$ and all $\varphi\in[0,2\pi).$
\end{itemize}
  Also, let us remark that this assumption implies that the condition in \eqref{eq1.1} does not depend on $r>0$ (see \cite[Proposition 2.4]{sandric-TAMS}). On the other hand, note that if \eqref{eq1.2} holds for some $r_0>0$, then it also holds for all $0<r<r_0.$
According to the same reference, if we need complete independence on $r>0$ in \eqref{eq1.2}, it suffices to assume that the function $\xi\longmapsto\inf_{x\in\R^{2}}\sqrt{q(x,\xi)}$ is subadditive (that is, $\inf_{x\in\R^{2}}\sqrt{q(x,\xi_1+\xi_2)}\leq\inf_{x\in\R^{2}}\sqrt{q(x,\xi_1)}+\inf_{x\in\R^{2}}\sqrt{q(x,\xi_2)}$ for all $\xi_1,\xi_2\in\R^{2}$). The main result of this section is the following (see also \cite{sandric-TAMS} and \cite[Section 38]{sato-book}   for the one-dimensional case).
\begin{theorem}\label{tm3.2} Let $\process{F}$ and $\process{\bar{F}}$ be
two-dimensional  L\'evy-type processes  with  symbols $q(x,\xi)$ and $\bar{q}(x,\xi)$ and  L\'evy triplets
$(0,c(x)I,\nu(x,dy))$ and $(0,\bar{c}(x)I,\bar{\nu}(x,dy))$, respectively.
 If there exists a
rotation of the plane $R_\varphi$, $\varphi\in[0,2\pi)$, such that
\begin{equation}\label{eq3.2}\sup_{x\in\R^{2}}\int_{\R^{2}}
|y|^{2}|\nu(x,dy)-\bar{\nu}(R_\varphi x,dy)|<\infty,\end{equation}  then $q(x,\xi)$ satisfies  \eqref{eq1.1} if, and only if,  $\bar{q}(x,\xi)$ satisfies \eqref{eq1.1}.
Further, denote $$c:=\frac{1}{2}\sup_{x\in\R^{2}}|c(x)-\bar{ c}(R_\varphi x)|+\sup_{x\in\R^{2}}\int_{\R^{2}}
|y|^{2}|\nu(x,dy)-\bar{\nu}(R_\varphi x,dy)|.$$
If
\begin{equation}\label{eq3.3}\liminf_{|\xi|\longrightarrow0}\frac{\inf_{x\in \R^{2}}q(x,\xi)}{|\xi|^{2}}>c,\end{equation}then, under \eqref{eq3.2},
$q(x,\xi)$ satisfies  \eqref{eq1.2} if, and only if,  $\bar{q}(x,\xi)$ satisfies \eqref{eq1.2}.
\end{theorem}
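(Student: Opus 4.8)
The plan is to dispose of the rotation first, via Proposition \ref{p3.1}, and then to compare the two symbols through a single uniform quadratic estimate. Since a rotation $R_\varphi$ is a regular matrix with $R_\varphi^{-1}=R_\varphi^{T}$ and $\det R_\varphi=1$, applying Proposition \ref{p3.1} with $M=R_\varphi^{-1}$ to $\process{\bar F}$ produces a L\'evy-type process $\process{\tilde F}:=\process{R_\varphi^{-1}\bar F}$ whose triplet is, after invoking the radiality of $\bar q$ (so that $\bar\nu(x,R_\varphi dy)=\bar\nu(x,dy)$ and $\bar q(x,R_\varphi\xi)=\bar q(x,\xi)$), given by $\tilde c(x)=\bar c(R_\varphi x)$, $\tilde\nu(x,dy)=\bar\nu(R_\varphi x,dy)$ and $\tilde q(x,\xi)=\bar q(R_\varphi x,\xi)$. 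Because $x\mapsto R_\varphi x$ is a bijection of $\R^{2}$, we have $\sup_{x}|\tilde q(x,\xi)|=\sup_{x}|\bar q(x,\xi)|$ and $\inf_{x}\mathrm{Re}\,\tilde q(x,\xi)=\inf_{x}\mathrm{Re}\,\bar q(x,\xi)$ for every $\xi$, so \eqref{eq1.1} and \eqref{eq1.2} hold for $\tilde q$ exactly when they hold for $\bar q$. Moreover \eqref{eq3.2} turns into $\sup_{x}\int_{\R^{2}}|y|^{2}|\nu(x,dy)-\tilde\nu(x,dy)|<\infty$, the constant $c$ is preserved, and \eqref{eq3.3} is a condition on $q$ alone and is untouched. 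Hence it suffices to prove the theorem with $\process{\bar F}$ replaced by $\process{\tilde F}$, i.e. we may assume $R_\varphi=I$.

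With $R_\varphi=I$ the radiality assumption is the workhorse. Since the triplets are $(0,c(x)I,\nu(x,dy))$ with each $\nu(x,\cdot)$ invariant under all rotations, in particular symmetric under $y\mapsto-y$, both symbols are real and nonnegative and collapse to $q(x,\xi)=\tfrac12 c(x)|\xi|^{2}+\int_{\R^{2}}(1-\cos\langle\xi,y\rangle)\nu(x,dy)$, and likewise for $\bar q$. The central step is
\[
\sup_{x\in\R^{2}}|q(x,\xi)-\bar q(x,\xi)|\le c\,|\xi|^{2},\qquad \xi\in\R^{2}.
\]
Here the oddness-killing effect of symmetry is essential: writing $\mu(x,dy):=\nu(x,dy)-\bar\nu(x,dy)$, the compensator $i\langle\xi,y\rangle1_{B_1(0)}(y)$ and the imaginary part of $e^{i\langle\xi,y\rangle}$ integrate to zero against the symmetric $\mu(x,\cdot)$, so the jump contribution reduces to $\int_{\R^{2}}(1-\cos\langle\xi,y\rangle)\mu(x,dy)$, and the elementary bound $0\le 1-\cos t\le \tfrac12 t^{2}$ together with \eqref{eq3.2} yields the $|\xi|^{2}$-control with the stated constant $c$ (being slightly generous with the numerical factor is harmless). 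I expect this estimate to be the main obstacle: without symmetry the big-jump part is only $O(|\xi|)$, which is too weak near the origin, and radiality is exactly what removes the offending first-order term.

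Because every symbol is radial, $Q(\xi):=\sup_x q(x,\xi)$, $\bar Q(\xi):=\sup_x\bar q(x,\xi)$, $P(\xi):=\inf_x q(x,\xi)$ and $\bar P(\xi):=\inf_x\bar q(x,\xi)$ depend only on $\rho=|\xi|$, so the Chung--Fuchs integrals reduce to one-dimensional radial integrals whose convergence is governed by the behaviour as $\rho\to0$ (on any annulus $\epsilon\le|\xi|\le r$ the integrands are integrable by local non-degeneracy of the symbols). From the estimate above, $|Q-\bar Q|\le c\rho^{2}$ and $|P-\bar P|\le c\rho^{2}$. For the recurrence statement I would note that, by Fatou's lemma applied to the angular (Bessel) average of $\cos\langle\xi,y\rangle$, any non-degenerate symbol satisfies $\liminf_{\rho\to0}q(x_0,\rho)/\rho^{2}\ge\tfrac12 c(x_0)+\tfrac14\int|y|^{2}\nu(x_0,dy)>0$ at some $x_0$; since $Q\ge q(x_0,\cdot)$ and $\bar Q\ge\bar q(x_0,\cdot)$, both $\ell:=\liminf_{\rho\to0}\bar Q/\rho^{2}$ and $\liminf_{\rho\to0}Q/\rho^{2}$ are strictly positive. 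Then the error absorbs, $Q\le\bar Q+c\rho^{2}\le(1+2c/\ell)\bar Q$ for small $\rho$, and symmetrically, so $Q\asymp\bar Q$ near $0$ and \eqref{eq1.1} transfers in both directions using only \eqref{eq3.2}.

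For the transience statement the same absorption is wanted for $P$ and $\bar P$, but now $\liminf_{\rho\to0}P/\rho^{2}$ need not be positive, because the infimum over $x$ may be driven by nearly degenerate points, so the automatic lower bound available for the supremum fails. This is exactly why \eqref{eq3.3} is imposed: choosing $c<\ell'<\liminf_{\rho\to0}P/\rho^{2}$ gives $P\ge\ell'\rho^{2}$ for small $\rho$, whence $\bar P\ge P-c\rho^{2}\ge(\ell'-c)\rho^{2}>0$ and $(1-c/\ell')P\le\bar P\le(1+c/\ell')P$ with both constants positive. Thus $P\asymp\bar P$ near $0$ and \eqref{eq1.2} transfers in both directions. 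I expect the only remaining routine points to be the bookkeeping around local non-degeneracy at nonzero frequencies, ensuring the radial integrals are finite away from the origin so that convergence is genuinely decided near $\rho=0$.
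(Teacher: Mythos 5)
Your proposal is correct and follows essentially the same route as the paper: reduce to $R_\varphi=I$ via Proposition \ref{p3.1}, establish the key quadratic estimate $\sup_{x}|q(x,\xi)-\bar q(x,\xi)|\leq c|\xi|^{2}$ from radiality (which kills the drift and the odd part of the jump term), and absorb this error near $\xi=0$ using a positive lower bound on $\inf_x q(x,\xi)/|\xi|^{2}$ (supplied by \eqref{eq3.3} for the transience part). The only difference is cosmetic: for the recurrence part the paper splits into the cases $\sup_x\int|y|^{2}\nu(x,dy)<\infty$ (citing an earlier recurrence theorem) and $=\infty$ (where the ratio of the suprema tends to $1$), whereas you give a unified two-sided comparability argument from $\liminf_{\rho\to0}\sup_x q(x,\xi)/|\xi|^{2}>0$, which is a harmless and slightly cleaner variant.
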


\begin{proof} First, observe that, due to Proposition \ref{p3.1}, $\bar{\nu}(R_\varphi x,dy)=\bar{\nu}(R^{-1}_{2\pi-\varphi} x,R^{-1}_{2\pi-\varphi} dy)$ is the L\'evy measure of the L\'evy-type process $\{R_{2\pi-\varphi} \bar{F}_t\}_{t\geq0}$.
Next, note
that \eqref{eq3.2} implies  $$\sup_{x\in\R^{2}}\int_{\R^{2}}
|y|^{2}\nu(x,dy)<\infty\quad \textrm{if, and only if,}\quad \sup_{x\in\R^{2}}\int_{\R^{2}}
|y|^{2}\bar{\nu}(x,dy)<\infty.$$ Indeed, we have
\begin{align*}\int_{\R^{2}} |y|^{2}\bar{\nu}(R_\varphi x,dy)&=\int_{\R^{2}}
|y|^{2}|\bar{\nu}(R_\varphi x,dy)-\nu(x,dy)+\nu(x,dy)|\\
&\leq\int_{\R^{2}} |y|^{2}\nu(x,dy)+\int_{\R^{2}}
|y|^{2}|\nu(x,dy)-\bar{\nu}(R_\varphi x,dy)|,\end{align*} and similarly
$$\int_{\R^{2}}|y|^{2}\nu(x,dy)\leq\int_{\R^{2}}
|y|^{2}\bar{\nu}(R_\varphi x,dy)+\int_{\R^{2}}
|y|^{2}|\nu(x,dy)-\bar{\nu}(R_\varphi x,dy)|.$$
Now, in the
case when $$\sup_{x\in\R^{2}}\int_{\R^{2}} |y|^{2}\nu(x,dy)<\infty,\quad \textrm{or, equivalently,}\quad \sup_{x\in\R^{2}}\int_{\R^{2}} |y|^{2}\bar{\nu}(x,dy)<\infty,$$  the assertion of the theorem easily follows from \cite[Theorem 2.8]{sandric-TAMS}.
Next, suppose that $$\sup_{x\in\R^{2}}\int_{\R^{2}} |y|^{2}\nu(x,dy)=\infty\quad \textrm{or,
equivalently,} \quad \sup_{x\in\R^{2}}\int_{\R^{2}} |y|^{2}\bar{\nu}(x,dy)=\infty.$$
Then, by using  the radiality  of the function $\xi\longmapsto q(x,\xi)$ and  Fatou's
lemma, we conclude  \begin{align*}\liminf_{|\xi|\longrightarrow0}\frac{\sup_{x\in\R^{2}}q(x,\xi)}{|\xi|^{2}}&=\liminf_{|\xi|\longrightarrow0}\frac{\sup_{x\in\R^{2}}q(x,|\xi|e_1)}{|\xi|^{2}}\\
&\geq\liminf_{|\xi|\longrightarrow0}\sup_{x\in\R^{d}}\int_{\R^{2}}\frac{1-\cos\langle |\xi|e_1,y\rangle}{|\xi|^{2}}\nu(x,dy)\\
&\geq\liminf_{|\xi|\longrightarrow0}\int_{\R^{2}}\frac{1-\cos\langle |\xi|e_1,y\rangle}{|\xi|^{2}}\nu(x,dy)\\
&\geq\frac{1}{2}\int_{\R^{2}}\langle e_1,y\rangle^{2}\nu(x,dy)\\
&=\frac{1}{4}\int_{\R^{2}}\langle e_1,y\rangle^{2}\nu(x,dy)+\frac{1}{4}\int_{\R^{2}}\langle e_2,y\rangle^{2}\nu(x,dy)\\
&=\frac{1}{4}\int_{\R^{2}}|y|^{2}\nu(x,dy),
\end{align*}where $e_1$ and $e_2$ denote the coordinate vectors of $\R^{2}.$
Thus, \begin{equation}\label{eq3.4}\liminf_{|\xi|\longrightarrow0}\frac{\sup_{x\in\R^{2}}q(x,\xi)}{|\xi|^{2}}=\infty.
\end{equation}
Next, we have
\begin{align}\label{eq3.5}&|\sup_{x\in\R^{2}}q(x,\xi)-\sup_{x\in\R^{2}}\bar{q}(x,\xi)|\nonumber\\&
=|\sup_{x\in\R^{2}}q(x,\xi)-\sup_{x\in\R^{2}}\bar{q}(R_\varphi x,\xi)|\nonumber\\&
\leq\sup_{x\in\R^{2}}|q(x,\xi)-\bar{q}(R_\varphi x,\xi)|\nonumber\\&\leq
\frac{1}{2}|\xi|^{2}\sup_{x\in\R^{2}}|c(x)-\bar{ c}(R_\varphi x)| +\sup_{x\in\R^{2}}\left|\int_{\R^{2}}
(1-\cos\langle\xi, y\rangle)\nu(x,dy)-\int_{\R^{2}} (1-\cos\langle\xi,
y\rangle)\bar{\nu}(R_\varphi x,dy)\right|\nonumber\\&\leq
\frac{1}{2}|\xi|^{2}\sup_{x\in\R^{2}}|c(x)-\bar{ c}(R_\varphi x)| +\sup_{x\in\R^{2}}\int_{\R^{2}}
(1-\cos\langle\xi, y\rangle)|\nu(x,dy)-\bar{\nu}(R_\varphi x,dy)|\nonumber\\&\leq\frac{1}{2}|\xi|^{2}\sup_{x\in\R^{2}}|c(x)-\bar{ c}(R_\varphi x)| +|\xi|^{2}\sup_{x\in\R^{2}}\int_{\R^{2}}
|y|^{2}|\nu(x,dy)-\bar{\nu}(R_\varphi x,dy)|\nonumber\\
&=c|\xi|^{2},
\end{align} where in the  the penultimate
 step we used the fact that $1-\cos u\leq u^{2}$
for all $u\in\R.$
Finally, \eqref{eq3.4} and \eqref{eq3.5} imply
$$\lim_{|\xi|\longrightarrow0}\frac{\sup_{x\in\R^{2}}\bar{q}(x,\xi)}{\sup_{x\in\R^{2}}q(x,\xi)}=1+\lim_{|\xi|\longrightarrow0}\frac{\sup_{x\in\R^{2}}\bar{q}(x,\xi)-\sup_{x\in\R^{2}}q(x,\xi)}{\sup_{x\in\R^{2}}q(x,\xi)}=1,$$
which together with the radiality of $\xi\longmapsto q(x,\xi)$ proves the first assertion.

 Now, we prove the second assertion. First, as above,
\begin{align}\label{eq3.6}|\inf_{x\in\R^{2}}q(x,\xi)-\inf_{x\in\R^{2}}\bar{q}(x,\xi)|&=
|\inf_{x\in\R^{2}}q(x,\xi)-\inf_{x\in\R^{2}}\bar{q}(R_\varphi x,\xi)|\nonumber\\&
\leq\sup_{x\in\R^{2}}|q(x,\xi)-\bar{q}(R_\varphi x,\xi)|\nonumber\\&\leq c|\xi|^{2}.
\end{align} Hence, by (\ref{eq3.3}) and (\ref{eq3.6}),
\begin{align*}\liminf_{\xi\longrightarrow0}\frac{\inf_{x\in\R^{2}}\bar{q}(x,\xi)}{\inf_{x\in\R^{2}}q(x,\xi)}&=1+\liminf_{\xi\longrightarrow0}\frac{\inf_{x\in\R^{2}}\bar{q}(x,\xi)-\inf_{x\in\R^{2}}q(x,\xi)}{\inf_{x\in\R^{2}}q(x,\xi)}\\&\geq
1-\frac{c}{\liminf_{|\xi|\longrightarrow0}\frac{\inf_{x\in\R^{2}}q(x,\xi)}{|\xi|^{2}}}\\
&>0,\end{align*}
and
\begin{align*}\limsup_{\xi\longrightarrow0}\frac{\inf_{x\in\R^{2}}\bar{q}(x,\xi)}{\inf_{x\in\R^{2}}q(x,\xi)}&=1+\limsup_{\xi\longrightarrow0}\frac{\inf_{x\in\R^{2}}\bar{q}(x,\xi)-\inf_{x\in\R^{2}}q(x,\xi)}{\inf_{x\in\R^{2}}q(x,\xi)}\\&\leq
1+\frac{c}{\liminf_{|\xi|\longrightarrow0}\frac{\inf_{x\in\R^{2}}q(x,\xi)}{|\xi|^{2}}}\\
&\leq2,\end{align*}
which proves  the desired result.
\end{proof}
Note that every two-dimensional L\'evy process with radial symbol automatically satisfies the relation in \eqref{eq3.3}.
A situation where the
perturbation condition in \eqref{eq3.2} trivially holds true is given in
the following proposition.
\begin{proposition}\label{p3.3}
 Let $\process{F}$ and $\process{\bar{F}}$ be
two-dimensional Le\'vy-type processes with  L\'evy
measures $\nu(x,dy)$ and $\bar{\nu}(x,dy)$, respectively. If there
exist a rotation of the plane $R_\varphi$, $\varphi\in[0,2\pi)$, and $r>0$  such that $\nu(x,B)=\bar{\nu}(R_\varphi x,B)$  for all $x\in\R^{2}$ and all $B\in\mathcal{B}(\R^{2})$, $B\subseteq B^{c}_r(0)$, then  the condition in
\eqref{eq3.2} holds true.
\end{proposition}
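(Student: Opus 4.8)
The plan is to exploit the fact that the hypothesis forces the signed kernel $\mu(x,dy):=\nu(x,dy)-\bar{\nu}(R_\varphi x,dy)$ to be concentrated, in the total-variation sense, on the ball $B_r(0)$. Indeed, since $\nu(x,B)=\bar{\nu}(R_\varphi x,B)$ for every Borel set $B\subseteq B_r^{c}(0)$, the signed measure $\mu(x,\cdot)$ assigns zero to every such $B$, so its total variation measure satisfies $|\mu(x,\cdot)|(B_r^{c}(0))=0$. Consequently
\begin{equation*}\int_{\R^{2}}|y|^{2}|\nu(x,dy)-\bar{\nu}(R_\varphi x,dy)|=\int_{B_r(0)}|y|^{2}|\nu(x,dy)-\bar{\nu}(R_\varphi x,dy)|,\end{equation*}
and it suffices to bound the right-hand side uniformly in $x\in\R^{2}$.

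Next I would discard the signed-measure structure by the elementary estimate $|\mu(x,\cdot)|\leq\nu(x,\cdot)+\bar{\nu}(R_\varphi x,\cdot)$ for the total variation of a difference of two positive measures, so that
\begin{equation*}\int_{B_r(0)}|y|^{2}|\mu(x,dy)|\leq\int_{B_r(0)}|y|^{2}\nu(x,dy)+\int_{B_r(0)}|y|^{2}\bar{\nu}(R_\varphi x,dy).\end{equation*}
The step I expect to require the most care is the bounding of each of these two terms. One must \emph{not} try to pull out the crude bound $|y|^{2}\leq r^{2}$ and then estimate the remaining total mass $|\mu(x,\cdot)|(B_r(0))$, since a L\'evy measure may charge arbitrarily large mass near the origin and this total mass can be infinite. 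Instead the weight $|y|^{2}$ must be retained and paired with the truncation $\min\{1,|y|^{2}\}$. To this end I would use the pointwise inequality $|y|^{2}\leq\max\{1,r^{2}\}\min\{1,|y|^{2}\}$, valid for all $y$ with $|y|<r$ (it is an equality when $|y|\leq1$, and follows from $\min\{1,|y|^{2}\}=1$ when $1<|y|<r$), which yields
\begin{equation*}\int_{B_r(0)}|y|^{2}\nu(x,dy)\leq\max\{1,r^{2}\}\int_{\R^{2}}\min\{1,|y|^{2}\}\nu(x,dy),\end{equation*}
and the analogous bound for $\bar{\nu}$.

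Finally I would invoke the standing assumptions: since $\process{F}$ and $\process{\bar{F}}$ are L\'evy-type processes they satisfy \textbf{(C2)}, which, as recalled in Section \ref{s2}, is equivalent to the global boundedness of the L\'evy quadruple; in particular both $\sup_{x}\int_{\R^{2}}\min\{1,|y|^{2}\}\nu(x,dy)$ and $\sup_{x}\int_{\R^{2}}\min\{1,|y|^{2}\}\bar{\nu}(x,dy)$ are finite. Taking the supremum over $x$ in the displayed bounds, and using that $R_\varphi$ is a bijection of $\R^{2}$ so that $\sup_{x}\int_{B_r(0)}|y|^{2}\bar{\nu}(R_\varphi x,dy)=\sup_{x}\int_{B_r(0)}|y|^{2}\bar{\nu}(x,dy)$, produces the desired uniform bound and hence establishes \eqref{eq3.2}. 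Apart from the weighting subtlety flagged above, the argument is entirely routine bookkeeping.
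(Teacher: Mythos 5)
Your proof is correct, and the paper itself offers no proof of Proposition \ref{p3.3} — it is presented as a situation where \eqref{eq3.2} ``trivially holds true.'' Your argument (total variation vanishes off $B_r(0)$, bound $|y|^{2}$ by $\max\{1,r^{2}\}\min\{1,|y|^{2}\}$ inside $B_r(0)$, and invoke the global boundedness of the L\'evy quadruples guaranteed by \textbf{(C2)}) is exactly the routine justification the author leaves implicit, including the correct observation that one must keep the weight $|y|^{2}$ rather than bounding the possibly infinite mass near the origin.
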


Observe that Proposition \ref{p3.3} implies that the recurrence and transience properties of two-dimensional L\'evy-type processes, satisfying the conditions from Theorem \ref{tm3.2}, depend only on big jumps, that is, they do not depend on the continuous part of the process and small jumps. In the following theorem we slightly generalize Proposition \ref{p3.3}.
\begin{theorem} Let $\process{F}$ and $\process{\bar{F}}$ be two-dimensional L\'evy-type
processes with symbols $q(x,\xi)$ and $\bar{q}(x,\xi)$ and L\'evy
triplets $(0,c(x)I,\nu(x,dy))$ and $(0,\bar{c}(x)I,\bar{\nu}(x,dy))$, respectively.  Assume that there exist a rotation of the plane $R_\varphi$, $\varphi\in[0,2\pi)$, and compact set $C\subseteq\R^{2}$, such that
$\nu(x,B)\geq\bar{\nu}(R_\varphi x,B)$ for all $x\in\R^{2}$ and
all $B\in\mathcal{B}(\R^{2})$, $B\subseteq C^{c}$. Then,
\begin{itemize}
  \item [(i)] if $q(x,\xi)$ satisfies \eqref{eq1.1},
 $\bar{q}(x,\xi)$ also satisfies \eqref{eq1.1}.
  \item [(ii)] if
$\bar{q}(x,\xi)$ satisfies   \eqref{eq1.2} and $q(x,\xi)$ satisfies \eqref{eq3.3},
 $q(x,\xi)$ also satisfies \eqref{eq1.2}.
\end{itemize}
\end{theorem}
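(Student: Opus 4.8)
The plan is to reduce both parts to a pointwise comparison, near $\xi=0$, of the radial symbols $q(x,\xi)=\tfrac12 c(x)|\xi|^2+\int_{\R^2}(1-\cos\langle\xi,y\rangle)\nu(x,dy)$ and its analogue $\bar q$. Since here $b=0$ and $\nu(x,\cdot)$ is rotation invariant, the imaginary part of the L\'evy--Khintchine integral vanishes, so $q,\bar q$ are real and nonnegative and \eqref{eq1.1}, \eqref{eq1.2} are conditions on $\sup_x q$ and $\inf_x q$ directly. The device is to split each L\'evy integral at the compact set $C$. For the diffusion part and the integral over $C$ one has, uniformly in $x$, $\tfrac12 c(x)|\xi|^2+\int_C(1-\cos\langle\xi,y\rangle)\nu(x,dy)\le K|\xi|^2$, because $1-\cos u\le\tfrac12 u^2$ and, by the global boundedness of the L\'evy quadruple equivalent to \textbf{(C2)}, $\sup_x c(x)<\infty$ and $\sup_x\int_C|y|^2\nu(x,dy)<\infty$ (and likewise for $\bar\nu$). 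The only part sensitive to $x$ and $\xi$ is the integral over $C^c$, where the hypothesis $\nu(x,\cdot)\ge\bar\nu(R_\varphi x,\cdot)$ and $1-\cos\langle\xi,y\rangle\ge0$ give $\int_{C^c}(1-\cos\langle\xi,y\rangle)\nu(x,dy)\ge\int_{C^c}(1-\cos\langle\xi,y\rangle)\bar\nu(R_\varphi x,dy)$. As $R_\varphi$ is a bijection of $\R^2$, replacing $x$ by $R_\varphi x$ leaves suprema and infima over $x$ unchanged, so all comparisons can be phrased directly between $\sup_x q,\sup_x\bar q$ and between $\inf_x q,\inf_x\bar q$.

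For part (i) I would combine the above into $\sup_x\bar q(x,\xi)\le\sup_x q(x,\xi)+K|\xi|^2$. If $\sup_x\int_{\R^2}|y|^2\nu(x,dy)<\infty$, then the comparison on $C^c$ plus boundedness on $C$ force $\sup_x\int_{\R^2}|y|^2\bar\nu(x,dy)<\infty$ too, and both processes satisfy \eqref{eq1.1} by \cite[Theorem 2.8]{sandric-TAMS}, so the implication is trivial. If $\sup_x\int_{\R^2}|y|^2\nu(x,dy)=\infty$, the Fatou argument used in the proof of Theorem \ref{tm3.2} applies verbatim to $q$ and yields \eqref{eq3.4}, i.e. $\sup_x q(x,\xi)/|\xi|^2\to\infty$. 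Hence $K|\xi|^2=o(\sup_x q(x,\xi))$, so $\sup_x\bar q(x,\xi)\le 2\sup_x q(x,\xi)$ for small $|\xi|$, and $\int_{B_\delta(0)}(\sup_x\bar q)^{-1}d\xi\ge\tfrac12\int_{B_\delta(0)}(\sup_x q)^{-1}d\xi=\infty$, using that for radial symbols \eqref{eq1.1} is independent of the radius.

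For part (ii) the same decomposition, with the roles reversed, gives $\inf_x q(x,\xi)\ge\inf_x\bar q(x,\xi)-K|\xi|^2$. The hard part is that this bound degenerates exactly on the set of $\xi$ where $\inf_x\bar q(x,\xi)$ is only of order $|\xi|^2$, where the right-hand side can turn nonpositive; this is what \eqref{eq3.3} repairs, since it furnishes $\delta>0$ and $c'>0$ with $\inf_x q(x,\xi)\ge c'|\xi|^2$ for $|\xi|<\delta$. I would then argue by a dichotomy at each such $\xi$: where $\inf_x\bar q(x,\xi)\ge 2K|\xi|^2$ the comparison bound gives $\inf_x q\ge\tfrac12\inf_x\bar q$, and where $\inf_x\bar q(x,\xi)<2K|\xi|^2$ the quadratic lower bound gives $\inf_x q\ge c'|\xi|^2\ge\tfrac{c'}{2K}\inf_x\bar q$. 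Setting $\kappa:=\min\{\tfrac12,\tfrac{c'}{2K}\}>0$ yields $\inf_x q(x,\xi)\ge\kappa\inf_x\bar q(x,\xi)$ throughout $B_\delta(0)$, whence $\int_{B_\delta(0)}(\inf_x q)^{-1}d\xi\le\kappa^{-1}\int_{B_\delta(0)}(\inf_x\bar q)^{-1}d\xi<\infty$ by the transience of $\bar q$, using that \eqref{eq1.2} for some $r_0$ persists for all smaller radii. This gives \eqref{eq1.2} for $q$ and closes the argument.
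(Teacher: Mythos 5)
Your proposal is correct and follows essentially the same route as the paper: split the symbol into the diffusion part plus the L\'evy integral over a compact set (bounded by $K|\xi|^2$ uniformly in $x$ via \textbf{(C2)}) and the integral over the complement (compared directly using the domination hypothesis and $1-\cos\geq0$), then absorb the $K|\xi|^2$ error using the Fatou-based estimate \eqref{eq3.4} for part (i) and the quadratic lower bound \eqref{eq3.3} for part (ii). The only cosmetic differences are that you handle the finite-second-moment case in (i) explicitly, and in (ii) you use a pointwise dichotomy where the paper bounds $\limsup_{|\xi|\to0}\inf_x\bar q/\inf_x q$; both yield the same two-sided comparability of the symbols near $\xi=0$.
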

\begin{proof}First, fix $r>0$  large enough such that $C\subseteq B_r(0)$. Then, by the same reasoning as in the proof of Theorem \ref{tm3.2}, we conclude
$$\sup_{x\in\R^{2}}\bar{q}(x,\xi)=\sup_{x\in\R^{2}}\bar{q}(R_\varphi x,\xi)\leq
\bar{c}|\xi|^{2} +\sup_{x\in\R^{2}}\int_{B^{c}_r(0)}(1-\cos\langle\xi,
y\rangle)\nu(x,dy)
,$$
where
$$\bar{c}=\frac{1}{2}\sup_{x\in\R^{2}}\bar{c}(x)+\sup_{x\in\R^{2}}\int_{B_r(0)}|y|^{2}\bar{\nu}(x,dy).$$
Finally, \eqref{eq3.4} implies
\begin{align*}\limsup_{|\xi|\longrightarrow0}\frac{\sup_{x\in\R^{2}}\bar{q}(x,\xi)}{\sup_{x\in\R^{2}}q(x,\xi)}&\leq\limsup_{|\xi|\longrightarrow0}\frac{\bar{c}|\xi|^{2}+\sup_{x\in\R^{2}}\int_{B_r^{c}(0)}(1-\cos\langle\xi, y\rangle)\nu(x,dy)}{\sup_{x\in\R^{2}}q(x,\xi)}\\&\leq1+\limsup_{|\xi|\longrightarrow0}\frac{\bar{c}|\xi|^{2}}{\sup_{x\in\R^{2}}q(x,\xi)}\\&=1,\end{align*}
which together with the radiality of $\xi\longrightarrow q(x,\xi)$ proves the first assertion.

Now, we prove the second assertion.  Again, fix $r>0$  large enough
such that $C\subseteq B_r(0)$. By the same reasoning as above, we have
$$
\inf_{x\in\R^{2}}\bar{q}(x,\xi)=\inf_{x\in\R^{2}}\bar{q}(R_\varphi x,\xi)\leq \bar{c}|\xi|^{2}+\inf_{x\in\R^{2}}\int_{B_r^{c}(0)}(1-\cos\langle\xi, y\rangle)\nu(x,dy).$$ Thus,
\begin{align*}\limsup_{|\xi|\longrightarrow0}\frac{\inf_{x\in\R^{2}}\bar{q}(x,\xi)}{\inf_{x\in\R^{2}}q(x,\xi)}&\leq\limsup_{|\xi|\longrightarrow0}\frac{\bar{c}|\xi|^{2}+\inf_{x\in\R^{2}}\int_{B_r^{c}(0)}(1-\cos\langle\xi, y\rangle)\nu(x,dy)}{\inf_{x\in\R^{2}}q(x,dy)}\\&\leq1+\limsup_{|\xi|\longrightarrow0}\frac{\bar{c}|\xi|^{2}}{\inf_{x\in\R^{2}}q(x,\xi)}\\&\leq1+\frac{\bar{c}}{c},\end{align*}
 where the constant $c$ is defined in Theorem \ref{tm3.2}.
\end{proof}

\section{Conditions for Recurrence and Transience}\label{s4}

In many cases the Chung-Fuchs type conditions
 are
not practical, that is, it is not always easy to compute (appropriately estimate) the
integrals appearing in \eqref{eq1.1} and \eqref{eq1.2}. According
to this, in the sequel we derive  necessary and sufficient
conditions for the recurrence and transience of two-dimensional
L\'evy-type processes in terms of the L\'evy measures. Throughout this section we again assume  that the symbol $q(x,\xi)$ of a two-dimensional L\'evy-type process $\process{F}$ is radial in the co-variable.
We start this section with the following auxiliary result (see also \cite{sandric-TAMS} and \cite[Section 38]{sato-book}   for the one-dimensional case).
\begin{proposition} \label{tm3.5}Let $\process{F}$  be a
two-dimensional L\'evy-type process with  symbol
$q(x,\xi)$ and L\'evy triplet $(0,c(x)I,\nu(x,dy))$.
Define
$$M\left(x,\rho,u\right):=\nu\left(x,\bigcup_{n=0}^{\infty}(2n\rho+u,2(n+1)\rho-u]\times\R\cap B^{c}_1(0)\right),\quad x\in\R^{2}, \ \rho\geq0,\ 0\leq u\leq\rho.$$ Then,
\begin{equation}\label{eq3.7}\int_r^{\infty}\left(\rho\sup_{x\in\R^{2}}\int_0^{\rho}uM(x,\rho,u)du\right)^{-1}d\rho=\infty\quad \textrm{for some (all)}\ r>0\end{equation}
if, and only if, \eqref{eq1.1} holds true. Further, if
\begin{equation}\label{eq3.8}\liminf_{|\xi|\longrightarrow0}\inf_{x\in\R^{2}}\int_{\R^{2}}\frac{1-\cos\langle\xi,y\rangle}{|\xi|^{2}}\nu(x,dy)=\infty,\end{equation} then
\eqref{eq1.2} holds true if, and only if,
\begin{equation}\label{eq3.9}\int_r^{\infty}\left(\rho\inf_{x\in\R^{2}}\int_0^{r}uM(x,\rho,u)du\right)^{-1}d\rho<\infty\quad \textrm{for some}\ r>0.\end{equation}
\end{proposition}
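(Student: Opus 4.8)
The plan is to reduce both equivalences to one computation that rewrites the inner integral $\int_0^\rho u\,M(x,\rho,u)\,du$ as a $\nu$-integral and then matches it with the symbol evaluated along the first coordinate axis. First I would fix $x$ and $\rho$ and apply Tonelli's theorem to the definition of $M$. Writing $S_\rho(u)=\bigcup_{n\geq0}(2n\rho+u,2(n+1)\rho-u]$, the key observation is that, for fixed $y_1>0$, one has $y_1\in S_\rho(u)$ (up to a Lebesgue-null set of $u$) exactly when $u<\delta_\rho(y_1):=\mathrm{dist}(y_1,2\rho\ZZ)$; this is a direct case check on the representative of $y_1$ modulo $2\rho$, the point being that only the nearest point of $2\rho\ZZ$ matters. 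Hence $\int_0^\rho u\,1_{\{y_1\in S_\rho(u)\}}\,du=\tfrac12\delta_\rho(y_1)^2$. Since the union runs over $n\geq0$ only, the strips lie in $\{y_1>0\}$, so after invoking the rotational invariance (hence reflection symmetry) of $\nu(x,\cdot)$ and the evenness of $\delta_\rho$ the identity reads
\[
\int_0^\rho u\,M(x,\rho,u)\,du=\tfrac14\int_{B_1^c(0)}\delta_\rho(y_1)^2\,\nu(x,dy).
\]

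Next I would establish the elementary two-sided bound $\tfrac{2}{\pi^2}\rho^2\bigl(1-\cos(\pi y_1/\rho)\bigr)\leq\delta_\rho(y_1)^2\leq\tfrac12\rho^2\bigl(1-\cos(\pi y_1/\rho)\bigr)$ for all $y_1\in\R$, $\rho>0$, which by periodicity reduces to bounding $s\mapsto\min(s,2-s)^2/(1-\cos\pi s)$ on $[0,2]$ by absolute constants. Because the standing radiality assumption makes $q(x,\xi)=\tfrac12c(x)|\xi|^2+\int(1-\cos\langle\xi,y\rangle)\nu(x,dy)$ real and makes $\sup_x q$ and $\inf_x q$ depend on $\xi$ only through $|\xi|$, combining the identity with this comparison gives, uniformly in $x$ and with absolute constants,
\[
\int_0^\rho u\,M(x,\rho,u)\,du\asymp\rho^2\int_{B_1^c(0)}\bigl(1-\cos\langle\tfrac{\pi}{\rho}e_1,y\rangle\bigr)\nu(x,dy),
\]
where $e_1$ is the first coordinate vector; the pointwise-in-$(x,y)$ nature of the estimate lets me take $\sup_x$ or $\inf_x$ freely.

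To finish I would pass to the pure big-jump process. Let $\process{\bar F}$ have L\'evy triplet $(0,0,\mathbf 1_{B_1^c(0)}\nu(x,dy))$; it is a radial L\'evy-type process (its symbol is bounded since $\sup_x\nu(x,B_1^c(0))<\infty$ by the global boundedness of the L\'evy quadruple), its symbol is $\bar q(x,\xi)=\int_{B_1^c(0)}(1-\cos\langle\xi,y\rangle)\nu(x,dy)$, and $M$ is unchanged in passing from $F$ to $\bar F$. Since $\nu-\mathbf 1_{B_1^c}\nu=\mathbf 1_{B_1}\nu$ and $\sup_x\int_{B_1(0)}|y|^2\nu(x,dy)<\infty$, condition \eqref{eq3.2} holds for the pair $(F,\bar F)$ with $R_\varphi$ the identity, so Theorem \ref{tm3.2} gives that $q$ satisfies \eqref{eq1.1} iff $\bar q$ does; moreover \eqref{eq3.8} yields $\liminf_{|\xi|\to0}\inf_x q(x,\xi)/|\xi|^2=\infty$, hence \eqref{eq3.3}, so Theorem \ref{tm3.2} also gives that $q$ satisfies \eqref{eq1.2} iff $\bar q$ does. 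Finally, by radiality and the substitution $s=\pi/\rho$,
\[
\int_{B_r(0)}\frac{d\xi}{\sup_x\bar q(x,\xi)}=2\pi^3\int_{\pi/r}^\infty\frac{d\rho}{\rho^3\,\sup_x\bar q(x,\tfrac{\pi}{\rho}e_1)},
\]
together with the same identity for $\inf_x$. Since the displayed $\asymp$ turns $\rho^3\sup_x\bar q(\cdot,\tfrac{\pi}{\rho}e_1)$ into $\rho\sup_x\int_0^\rho u\,M\,du$ (and likewise for $\inf$), the two $\rho$-integrals converge or diverge together, so \eqref{eq1.1} for $\bar q$ is equivalent to \eqref{eq3.7} and \eqref{eq1.2} for $\bar q$ is equivalent to \eqref{eq3.9} (with the inner integration over $[0,\rho]$, as in \eqref{eq3.7}); chaining with Theorem \ref{tm3.2} yields the two asserted equivalences for $q$.

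I expect the crux to be the first two paragraphs: one must verify the characterization $y_1\in S_\rho(u)\Leftrightarrow u<\delta_\rho(y_1)$ with its boundary cases, exploit rotational invariance to recover the full measure from the half-line union, and check that the constants in $\delta_\rho^2\asymp\rho^2(1-\cos(\pi\cdot/\rho))$ are genuinely absolute so that they survive the $\sup_x$/$\inf_x$ and the outer $\rho$-integration. A secondary point to watch is that evaluating the symbol only along $e_1$ loses no information — this is exactly where radiality of $\xi\mapsto q(x,\xi)$ enters — and that the perturbation step is legitimate, i.e.\ that the hypotheses of Theorem \ref{tm3.2}, namely \eqref{eq3.2} and (in the transient case) \eqref{eq3.3}, are indeed supplied by the global boundedness of the quadruple and by \eqref{eq3.8}, respectively.
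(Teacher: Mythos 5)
Your proposal is correct, but the core computation is genuinely different from the paper's. The paper writes $q(x,\xi)-\tfrac12 c(x)|\xi|^2=2|\xi|\int_0^\infty N(x,u)\sin(|\xi|u)\,du$ via integration by parts, splits the integral into quarter-periods $I_{n,1},\dots,I_{n,4}$ of length $\pi/(2|\xi|)$, recombines $I_{n,1}+I_{n,4}$ and $I_{n,2}+I_{n,3}$ to produce $M$ and an auxiliary $\bar M$ with $M\geq\bar M\geq0$, and then uses $\tfrac{2u}{\pi}\leq\sin u\leq u$ to reach the two-sided bound \eqref{eq3.12}; because that bound only involves $\int_0^{\pi/(2|\xi|)}$ while \eqref{eq3.7} involves $\int_0^\rho$, the converse direction needs the extra estimate $\int_0^{\pi/|\xi|}uM\,du\leq 5\int_0^{\pi/(2|\xi|)}uM\,du$. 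Your Tonelli identity $\int_0^\rho uM(x,\rho,u)\,du=\tfrac14\int_{B_1^c(0)}\mathrm{dist}(y_1,2\rho\ZZ)^2\,\nu(x,dy)$, followed by the pointwise comparison $\delta_\rho^2\asymp\rho^2(1-\cos(\pi y_1/\rho))$ (whose constants $2/\pi^2$ and $1/2$ check out), reaches the same comparison in one step with the full range $[0,\rho]$, so you avoid the factor-$5$ lemma entirely. The second divergence is in the bookkeeping of the continuous part: the paper removes only the small jumps via Theorem \ref{tm3.2} and Proposition \ref{p3.3}, keeps $c(x)$, and disposes of it at the end through the Fatou-based ratio estimates \eqref{eq3.13}--\eqref{eq3.14}; you push both the small jumps and $c(x)$ into the perturbation step, correctly observing that \eqref{eq3.8} forces $\liminf_{|\xi|\to0}\inf_x q(x,\xi)/|\xi|^2=\infty$ and hence \eqref{eq3.3}. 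Both routes implicitly treat the truncated object $(0,0,\mathbf 1_{B_1^c(0)}\nu(x,dy))$ as if it were the symbol of a L\'evy-type process — Theorem \ref{tm3.2} is stated for processes but its proof is purely about symbols, so this is harmless and is exactly the liberty the paper itself takes. You also correctly read the inner limit $r$ in \eqref{eq3.9} as the typo it is (the paper's own proof works with $\int_0^\rho$). Net effect: your argument is a little cleaner and makes the geometric content of $M$ (distance to the lattice $2\rho\ZZ$) explicit, while the paper's stays closer to the classical one-dimensional computation of \cite[Theorem 3.7]{sandric-TAMS} it is adapting.
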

\begin{proof} First, due to Theorem \ref{tm3.2} and Proposition \ref{p3.3}, without loss of generality, we can assume that $\sup_{x\in\R^{2}}\nu(x,B_1(0))=0$.
Next, note that, because of the radiality of the function $\xi\longmapsto q(x,\xi)$, the conditions in  \eqref{eq1.1} and \eqref{eq1.2} are equivalent to
\begin{equation}\label{eq3.10}\int_{0}^{r}\frac{\rho\,d\rho}{\overline{q}(\rho)}=\infty\quad \textrm{for some (all)}\ r>0\end{equation} and
\begin{equation}\label{eq3.11}\int_{0}^{r}\frac{\rho\,d\rho}{\underline{q}(\rho)}<\infty\quad \textrm{for some}\ r>0,\end{equation} respectively,
where $\overline{q}(\rho):=\sup_{x\in\R^{2}}q(x,\xi)$ and $\underline{q}(\rho):=\inf_{x\in\R^{2}}q(x,\xi)$ for $\rho\in[0,\infty)$ and $\xi\in\R^{2}$, $\rho=|\xi|.$ Denote by $N(x,u):=\nu(x,(u,\infty)\times\R)$ for $x\in\R^{2}$ and
$u\geq0$. Then, by following \cite [Theorem 3.7]{sandric-TAMS}, we have
\begin{align*}q(x,\xi)-\frac{1}{2}c(x)|\xi|^{2}&=\int_{\R^{2}}(1-\cos\langle\xi,
y\rangle)\nu(x,dy)\\&
=\int_{\R^{2}}(1-\cos\langle|\xi|e_i,y\rangle)\nu(x,dy)\\&
=2\int_{(0,\infty)\times\R}(1-\cos\langle|\xi|e_i,y\rangle)\nu(x,dy)\\&=2\int^{\infty}_{0}(1-\cos|\xi| u)d(-N(x,u))
\\&=2|\xi|\int_0^{\infty}N(x,u)\sin|\xi|
u\,du\\&=2|\xi|\sum_{n=0}^{\infty}\int_0^{\frac{2\pi}{|\xi|}}N\left(x,\frac{2\pi
n}{|\xi|}+u\right)\sin|\xi|
u\,du\\&=2|\xi|\sum_{n=0}^{\infty}\left(I_{n,1}+I_{n,2}+I_{n,3}+I_{n,4}\right),\quad i=1,2,\end{align*}
where in the second step we employed the fact that $\nu(x,dy)$ is rotationally invariant, in the fifth step we used the integration by parts formula and
\begin{align*}I_{n,1}&:=\int_0^{\frac{\pi}{2|\xi|}}N\left(x,\frac{2\pi
n}{|\xi|}+u\right)\sin|\xi| u\,du,\\
I_{n,2}&:=\int_{\frac{\pi}{2|\xi|}}^{\frac{\pi}{|\xi|}}N\left(x,\frac{2\pi
n}{|\xi|}+u\right)\sin|\xi|
u\,du=\int_{0}^{\frac{\pi}{2|\xi|}}N\left(x,\frac{2\pi
n}{|\xi|}+\frac{\pi}{|\xi|}-u\right)\sin|\xi| u\,du,\\
I_{n,3}&:=\int_{\frac{\pi}{|\xi|}}^{\frac{3\pi}{2|\xi|}}N\left(x,\frac{2\pi
n}{|\xi|}+u\right)\sin|\xi|
u\,du=-\int_{0}^{\frac{\pi}{2|\xi|}}N\left(x,\frac{2\pi
n}{|\xi|}+\frac{\pi}{|\xi|}+u\right)\sin|\xi| u\,du,\\
I_{n,4}&:=\int_{\frac{3\pi}{2|\xi|}}^{\frac{2\pi}{|\xi|}}N\left(x,\frac{2\pi
n}{|\xi|}+u\right)\sin|\xi|
u\,du=-\int_{0}^{\frac{\pi}{2|\xi|}}N\left(x,\frac{2\pi
n}{|\xi|}+\frac{2\pi}{|\xi|}-u\right)\sin|\xi| u\,du.\end{align*} Thus,
\begin{align*}I_{n,1}+I_{n,4}&=\int_0^{\frac{\pi}{2|\xi|}}\nu\left(x,\left(\frac{2\pi n}{|\xi|}+u,\frac{2\pi(n+1)}{|\xi|}-u\right]\times\R\right)\sin|\xi|
u\,du\\
I_{n,2}+I_{n,3}&=\int_0^{\frac{\pi}{2|\xi|}}\nu\left(x,\left(\frac{\pi
(2n+1)}{|\xi|}-u,\frac{\pi(2n+1)}{|\xi|}+u\right]\times\R\right)\sin|\xi| u\,du.\end{align*}
Now, by defining
$$\bar{M}(x,\rho,u):=\nu\left(x,\bigcup_{n=0}^{\infty}((2n+1)\rho-u,(2n+1)\rho+u]\times\R\right),\quad x\in\R^{2},\ \rho\geq0,\ 0\leq u\leq\rho,$$
$$q(x,\xi)-\frac{1}{2}c(x)|\xi|^{2}=2|\xi|\left(\int_0^{\frac{\pi}{2|\xi|}}M\left(x,\frac{\pi}{|\xi|},u\right)\sin|\xi| u\,du+\int_0^{\frac{\pi}{2|\xi|}}\bar{M}\left(x,\frac{\pi}{|\xi|},u\right)\sin|\xi|
u\,du\right).$$ Further, note that
$$M\left(x,\frac{\pi}{|\xi|},u\right)\geq
\bar{M}\left(x,\frac{\pi}{|\xi|},u\right)\geq0,\quad
u\in\left(0,\frac{\pi}{2|\xi|}\right],$$ and $$\frac{2u}{\pi}\leq\sin
u\leq u,\quad u\in\left(0,\frac{\pi}{2}\right].$$ Thus
\begin{equation}\label{eq3.12}\frac{4}{\pi}|\xi|\int_0^{\frac{\pi}{2|\xi|}}uM\left(x,\frac{\pi}{|\xi|},u\right)du\leq
\frac{q(x,\xi)-
\frac{1}{2}c(x)|\xi|^{2}}{|\xi|}\leq4|\xi|\int_0^{\frac{\pi}{2|\xi|}}uM\left(x,\frac{\pi}{|\xi|},u\right)du.\end{equation}
Next,  we have
\begin{equation*}\int_0^{\frac{\pi}{r}}\left(\varrho\sup_{x\in\R^{2}}\int_0^{\frac{\pi}{\varrho}}uM\left(x,\frac{\pi}{\varrho},u\right)du\right)^{-1}d\varrho=\int_r^{\infty}\left(\rho\sup_{x\in\R^{2}}\int_0^{\rho}uM\left(x,\rho,u\right)du\right)^{-1}d\rho\end{equation*} and
\begin{equation*}\int_0^{\frac{\pi}{r}}\left(\varrho\inf_{x\in\R^{2}}\int_0^{\frac{\pi}{r}}uM\left(x,\frac{\pi}{\varrho},u\right)du\right)^{-1}d\varrho=\int_r^{\infty}\left(\rho\inf_{x\in\R^{2}}\int_0^{\rho}uM\left(x,\rho,u\right)du\right)^{-1}d\rho,\end{equation*}
where we use the change of variables $\varrho\longmapsto\pi/\rho$. Thus,
\eqref{eq3.7} implies
$$\int_0^{\frac{\pi}{r}}\frac{\rho\, d\rho}{\sup_{x\in\R^{2}}\left(q(x,(\rho,0))-\frac{1}{2}c(x)\rho^{2}\right)}=\infty$$ and
$$\int_0^{\frac{\pi}{r}}\frac{\rho\, d\rho}{\inf_{x\in\R^{2}}\left(q(x,(\rho,0))-\frac{1}{2}c(x)\rho^{2}\right)}<\infty$$ implies \eqref{eq3.9}.
Finally,
\begin{align}\label{eq3.13}1&\leq\liminf_{\rho\longrightarrow0}\frac{\overline{q}(\rho)}{\sup_{x\in\R^{2}}\left(q(x,(\rho,0))-\frac{1}{2}c(x)\rho^{2}\right)}\nonumber\\
&\leq\limsup_{\rho\longrightarrow0}\frac{\overline{q}(\rho)}{\sup_{x\in\R^{2}}\left(q(x,(\rho,0))-\frac{1}{2}c(x)\rho^{2}\right)}\nonumber\\
&\leq\limsup_{\rho\longrightarrow0}\frac{\frac{1}{2}\rho^{2}\sup_{x\in\R^{2}}c(x)+\sup_{x\in\R^{2}}\left(q(x,(\rho,0))-\frac{1}{2}c(x)\rho^{2}\right)}{\sup_{x\in\R^{2}}\left(q(x,(\rho,0))-\frac{1}{2}c(x)\rho^{2}\right)} \nonumber\\
&\leq1+ \frac{\frac{1}{2}\sup_{x\in\R^{2}}c(x)}{\liminf_{\rho\longrightarrow0}\sup_{x\in\R^{2}}\int_{\R^{2}}\frac{1-\cos\langle(\rho,0), y\rangle}{\rho^{2}}\nu(x,dy)}\nonumber\\
&\leq1+ 2\frac{\sup_{x\in\R^{2}}c(x)}{\sup_{x\in\R^{2}}\int_{\R^{2}}|y|^{2}\nu(x,dy)},\end{align}
where in the final step we employed Fatou's lemma and the fact that $\nu(x,dy)$ is rotationally invariant.
Analogously,
\begin{align}\label{eq3.14}1&\leq\liminf_{\rho\longrightarrow0}\frac{\underline{q}(\rho)}{\left(q(x,(\rho,0))-\frac{1}{2}c(x)\rho^{2}\right)}\nonumber\\
&\leq\limsup_{\rho\longrightarrow0}\frac{\underline{q}(\rho)}{\inf_{x\in\R^{2}}\left(q(x,(\rho,0))-\frac{1}{2}c(x)\rho^{2}\right)}\nonumber\\
&\leq
1+ \frac{\frac{1}{2}\sup_{x\in\R^{2}}c(x)}{\liminf_{\rho\longrightarrow0}\inf_{x\in\R^{2}}\int_{\R^{2}}\frac{1-\cos\langle(\rho,0), y\rangle}{\rho^{2}}\nu(x,dy)}.\end{align}
Now, the assertion follows from \eqref{eq3.8}.

To prove the converse, first note that
\begin{align*}&\int_{0}^{\frac{\pi}{|\xi|}}u\,\nu\left(x,\left(\frac{2n\pi}{|\xi|}+u,\frac{2(n+1)\pi}{|\xi|}-u\right]\times\R\right)du\\&=
\int_{0}^{\frac{\pi}{2|\xi|}}u\,\nu\left(x,\left(\frac{2n\pi}{|\xi|}+u,\frac{2(n+1)\pi}{|\xi|}-u\right]\times\R\right)du\nonumber\\&\ \ \ +\int_{\frac{\pi}{2|\xi|}}^{\frac{\pi}{|\xi|}}u\,\nu\left(x,\left(\frac{2n\pi}{|\xi|}+u,\frac{2(n+1)\pi}{|\xi|}-u\right]\times\R\right)du\\&=
\int_{0}^{\frac{\pi}{2|\xi|}}u\,\nu\left(x,\left(\frac{2n\pi}{|\xi|}+u,\frac{2(n+1)\pi}{|\xi|}-u\right]\times\R\right)du\nonumber\\&\ \ \ +4\int_{\frac{\pi}{4|\xi|}}^{\frac{\pi}{2|\xi|}}u\,\nu\left(x,\left(\frac{2n\pi}{|\xi|}+2u,\frac{2(n+1)\pi}{|\xi|}-2u\right]\times\R\right)du\\&\leq
\int_{0}^{\frac{\pi}{2|\xi|}}u\,\nu\left(x,\left(\frac{2n\pi}{|\xi|}+u,\frac{2(n+1)\pi}{|\xi|}-u\right]\times\R\right)du\nonumber\\ &\ \ \ +4\int_{\frac{\pi}{4|\xi|}}^{\frac{\pi}{2|\xi|}}u\,\nu\left(x,\left(\frac{2n\pi}{|\xi|}+u,\frac{2(n+1)\pi}{|\xi|}-u\right]\times\R\right)du\\&\leq
5\int_{0}^{\frac{\pi}{2|\xi|}}u\,\nu\left(x,\left(\frac{2n\pi}{|\xi|}+u,\frac{2(n+1)\pi}{|\xi|}-u\right]\times\R\right)du.\end{align*}
Hence,
$$\int_{0}^{\frac{\pi}{|\xi|}}uM\left(x,\frac{\pi}{|\xi|},u\right)du\leq5\int_{0}^{\frac{\pi}{2|\xi|}}uM\left(x,\frac{\pi}{|\xi|},u\right)du,$$ that is,
\begin{align*}
\int_r^{\infty}\left(\rho\sup_{x\in\R^{2}}\int_0^{\rho}uM\left(x,\rho,u\right)du\right)^{-1}d\rho
&=\int_0^{\frac{\pi}{r}}\left(\varrho\sup_{x\in\R^{2}}\int_0^{\frac{\pi}{\varrho}}uM\left(x,\frac{\pi}{\varrho},u\right)du\right)^{-1}d\varrho\\
&\geq\frac{1}{5}\int_0^{\frac{\pi}{r}}\left(\varrho\sup_{x\in\R^{2}}\int_0^{\frac{\pi}{2\varrho}}uM\left(x,\frac{\pi}{\varrho},u\right)du\right)^{-1}d\varrho\end{align*}
and
\begin{align*}
\int_r^{\infty}\left(\rho\inf_{x\in\R^{2}}\int_0^{\rho}uM\left(x,\rho,u\right)du\right)^{-1}d\rho
&=\int_0^{\frac{\pi}{r}}\left(\varrho\inf_{x\in\R^{2}}\int_0^{\frac{\pi}{\varrho}}uM\left(x,\frac{\pi}{\varrho},u\right)du\right)^{-1}d\varrho\\
&\geq\frac{1}{5}\int_0^{\frac{\pi}{r}}\left(\varrho\inf_{x\in\R^{2}}\int_0^{\frac{\pi}{2\varrho}}uM\left(x,\frac{\pi}{\varrho},u\right)du\right)^{-1}d\varrho,\end{align*}
where in the first steps we again used the change of variables
$\rho\longmapsto\pi/\varrho$. Thus, according to  \eqref{eq3.8}, \eqref{eq3.12}, \eqref{eq3.13}
and  \eqref{eq3.14},      \eqref{eq3.10} and \eqref{eq3.9} imply \eqref{eq3.7} and \eqref{eq3.11},
respectively.
\end{proof}
Observe that in the L\'evy process case the condition in \eqref{eq3.8} is trivially satisfied.
As a direct consequence of  Proposition \ref{tm3.5}, we  get the following
characterization of the recurrence and transience  in terms of the
tail behavior of the L\'evy measures.
\begin{theorem}\label{c3.6} Let $\process{F}$  be a
two-dimensional L\'evy-type process with  symbol
$q(x,\xi)$ and  L\'evy measure $\nu(x,dy)$. Define
$$N\left(x,u\right):=\nu\left(x,(u,\infty)\times\R\right),\quad x\in\R^{2},\ u\geq0.$$  Then,
\begin{align}\label{eq3.15}\int_r^{\infty}\left(\rho\sup_{x\in\R^{2}}\int_0^{\rho}uN(x,u)du\right)^{-1}d\rho=\infty\quad \textrm{for some}\ r>0\end{align}
implies \eqref{eq3.7}, and \eqref{eq3.9} implies
\begin{align}\label{eq3.16}\int_r^{\infty}\left(\rho\inf_{x\in\R^{2}}\int_0^{\rho}uN(x,u)du\right)^{-1}d\rho<\infty\quad \textrm{for some}\ r>0.\end{align}
\end{theorem}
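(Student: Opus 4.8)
The plan is to reduce the whole statement to a single elementary set-containment inequality between the kernels $M(x,\rho,u)$ and $N(x,u)$, after which both implications follow at once by monotonicity of the defining integrals and the comparison test for improper integrals; there is essentially no analytic content beyond this observation.

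First I would record the pointwise bound $M(x,\rho,u)\le N(x,u)$ valid for all $x\in\R^{2}$, $\rho\ge0$ and $0\le u\le\rho$. This is immediate from the definitions: each strip $(2n\rho+u,2(n+1)\rho-u]\times\R$ with $n\ge0$ has left endpoint $2n\rho+u\ge u$, so
\[
\left(\bigcup_{n=0}^{\infty}(2n\rho+u,2(n+1)\rho-u]\times\R\right)\cap B_1^{c}(0)\subseteq(u,\infty)\times\R,
\]
and monotonicity of the measure $\nu(x,\cdot)$ yields $M(x,\rho,u)=\nu(x,\cdots)\le\nu(x,(u,\infty)\times\R)=N(x,u)$. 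Note that the restriction $\cap\, B_1^{c}(0)$ in the definition of $M$ only shrinks the integration set, so it is harmless for this bound; in particular, unlike in the proof of Proposition \ref{tm3.5}, no normalization of $\nu(x,B_1(0))$ is required here.

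Next I would integrate this inequality against $u\,du$ over $(0,\rho)$ and pass to the supremum, respectively infimum, over $x\in\R^{2}$. Since the bound holds for every $x$, both
\[
\sup_{x\in\R^{2}}\int_0^{\rho}uM(x,\rho,u)\,du\le\sup_{x\in\R^{2}}\int_0^{\rho}uN(x,u)\,du
\]
and the analogous inequality with $\inf$ in place of $\sup$ follow. Taking reciprocals reverses each inequality, so in \emph{both} cases the integrand built from $M$ dominates the corresponding one built from $N$ for every $\rho$. Consequently, for the first assertion, divergence of the $N$-integral in \eqref{eq3.15} forces divergence of the larger $M$-integral in \eqref{eq3.7}; and for the second assertion, finiteness of the $M$-integral in \eqref{eq3.9} forces finiteness of the smaller $N$-integral in \eqref{eq3.16}.

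Since the computation is routine, I expect no genuine obstacle; the only point demanding care is bookkeeping the direction of the inequalities after passing to reciprocals, because the two implications flow in opposite logical directions—divergence is propagated forward from $N$ to $M$ in the supremum case, while finiteness is propagated forward from $M$ to $N$ in the infimum case. Combined with Proposition \ref{tm3.5}, this then lets one replace the strip-kernel $M$ by the much simpler one-dimensional tail $N(x,u)=\nu(x,(u,\infty)\times\R)$ in the recurrence and transience criteria, which is exactly the intended consequence.
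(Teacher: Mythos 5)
Your proof is correct and is essentially the paper's own argument: the paper's entire proof is the observation that $N(x,u)\geq M(x,\rho,u)$ for all $x\in\R^{2}$, $\rho\geq0$ and $0\leq u\leq\rho$, which you establish via the same set containment and then propagate through the sup/inf and the reciprocals exactly as intended. Your elaboration of the direction of the inequalities is accurate, so there is nothing to add.
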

\begin{proof} The assertion  follows directly from the fact
$N(x,u)\geq M(x,\rho,u)$ for all $x\in\R^{2}$, all $\rho\geq0$ and all $0\leq u\leq \rho$.
\end{proof}
In general, we cannot conclude the equivalence in Theorem \ref{c3.6} (see \cite[Theorem 38.4]{sato-book}).
However, if, in addition, we assume the quasi-unimodality of the
measure $\nu(x,du\times\R)$, $x\in\R^{2}$,  then \eqref{eq3.7} will be equivalent to \eqref{eq3.15} and \eqref{eq3.9} will be equivalent to \eqref{eq3.16}. Recall that a symmetric Borel measure
$\mu(dx)$ on $\mathcal{B}(\R)$ is \emph{quasi-unimodal} if there
exists $x_0\geq0$ such that $x\longmapsto\mu(x,\infty)$ is a convex
function on $(x_0,\infty)$. Equivalently, a symmetric Borel measure
$\mu(dx)$ on $\mathcal{B}(\R)$ is quasi-unimodal if it is of the
form $\mu(dx)=\mu_0(dx)+f(x)dx,$ where the measure $\mu_0(dx)$ is
supported on $[-x_0,x_0]$, for some $x_0\geq0$, and the density
function $f(x)$ is supported on $[-x_0,x_0]^{c}$, it is symmetric
and decreasing on $(x_0,\infty)$ and
$\int_{x_0+\varepsilon}^{\infty}f(x)dx<\infty$ for every
$\varepsilon>0$ (see \cite[Chapters 5 and 7]{sato-book}). When
$x_0=0$, then $\mu(dx)$ is said to be \emph{unimodal}.

\begin{theorem}\label{tm3.7}
Let $\process{F}$  be a
two-dimensional L\'evy-type process with  symbol
$q(x,\xi)$ and L\'evy triplet $(0,c(x)I,\nu(x,dy))$. Assume  that there exists $u_0>1$ such that
the measure $\nu(x,du\times\R)$ is quasi-unimodal with respect to $u_0$ for all  $x\in\R^{2}$.
Then,
\eqref{eq1.1} holds true if, and only if, \eqref{eq3.15} holds
true. Further, if, in addition,
 $\nu(x,dy)$ satisfies \eqref{eq3.8},
 then \eqref{eq1.2} holds true if, and
only if, \eqref{eq3.16} holds true.
\end{theorem}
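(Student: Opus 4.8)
The plan is to route everything through results already in hand. Proposition \ref{tm3.5} tells us that \eqref{eq1.1} is equivalent to \eqref{eq3.7}, and that, under \eqref{eq3.8}, \eqref{eq1.2} is equivalent to \eqref{eq3.9}. Theorem \ref{c3.6} supplies the ``easy'' implications \eqref{eq3.15}$\Rightarrow$\eqref{eq3.7} and \eqref{eq3.9}$\Rightarrow$\eqref{eq3.16}, both coming from the trivial bound $M(x,\rho,u)\le N(x,u)$. Hence the theorem follows once I prove the two reverse implications \eqref{eq3.7}$\Rightarrow$\eqref{eq3.15} and \eqref{eq3.16}$\Rightarrow$\eqref{eq3.9}, and for this it suffices to establish a reverse comparison of the form
$$\int_0^\rho u\,N(x,u)\,du\le C\int_0^\rho u\,M(x,\rho,u)\,du+C'$$
valid for all $x\in\R^{2}$ and all $\rho$ large, with finite $C,C'$ independent of $x$ and $\rho$. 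Combined with the trivial $\int_0^\rho uM\le\int_0^\rho uN$, this makes $\sup_{x}\int_0^\rho uM$ and $\sup_{x}\int_0^\rho uN$ (respectively the $\inf_x$ versions) comparable up to a multiplicative constant for large $\rho$, so that the four Chung--Fuchs type integrals \eqref{eq3.7}, \eqref{eq3.9}, \eqref{eq3.15}, \eqref{eq3.16} converge or diverge simultaneously.

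To prove the comparison I would first reduce, exactly as in the proof of Proposition \ref{tm3.5}, to the case $\sup_{x}\nu(x,B_1(0))=0$, and then use quasi-unimodality to split the projected measure as $\nu(x,du\times\R)=\mu_0(x,du)+f(x,u)\,du$, where $\mu_0(x,\cdot)$ is supported in $[-u_0,u_0]$ and $u\mapsto f(x,u)$ is symmetric and nonincreasing on $(u_0,\infty)$. By the global boundedness of the L\'evy quadruple, $\sup_x\mu_0(x,\R)<\infty$ and $\sup_x\int u^{2}\mu_0(x,du)<\infty$, so the contribution of $\mu_0$ to either $\int_0^\rho uN$ or $\int_0^\rho uM$ is bounded uniformly in $x$ and $\rho$; this produces the additive error $C'$ and may be discarded. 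It then remains to compare the integrals built from the tail $\tilde N(x,u):=\int_u^\infty f(x,s)\,ds$; writing $\phi:=f(x,\cdot)$ (nonincreasing, supported in $(u_0,\infty)$), Fubini's theorem gives the weighted representations
$$\int_0^\rho u\,\tilde N(x,u)\,du=\tfrac12\int_0^\infty\phi(s)\min\{s,\rho\}^{2}\,ds,\qquad \int_0^\rho u\,\tilde M(x,\rho,u)\,du=\tfrac12\int_0^\infty\phi(s)\,W_\rho(s)\,ds,$$
where $W_\rho(s):=\mathrm{dist}\big(s,\{2n\rho:n\ge0\}\big)^{2}$ is the weight that attains its maximum $\rho^{2}$ at the odd multiples of $\rho$ and vanishes at the even ones.

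The heart of the matter is the purely one-dimensional estimate $\int_0^\infty\phi(s)\min\{s,\rho\}^{2}\,ds\le C\int_0^\infty\phi(s)W_\rho(s)\,ds$ for nonincreasing $\phi$. Since $W_\rho(s)=s^{2}$ on $[0,\rho]$, the part $\int_0^\rho\phi(s)s^{2}\,ds$ is dominated outright by $\int_0^\infty\phi W_\rho$; the real difficulty is to control the far part $\rho^{2}\int_\rho^\infty\phi(s)\,ds=\rho^{2}\tilde N(x,\rho)$, because there $\min\{s,\rho\}^{2}\equiv\rho^{2}$ is maximal precisely where $W_\rho$ degenerates to $0$ (at the even multiples of $\rho$). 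This is exactly where monotonicity of $\phi$ is indispensable: on each period $Q_k:=[2k\rho,(2k+2)\rho]$ one has $\int_{Q_k}W_\rho=\tfrac23\rho^{3}$, a fixed fraction of the available maximum, and since $\phi$ is nonincreasing its mass cannot concentrate at the zeros of $W_\rho$. Concretely, bounding $\phi$ from below by its value at the right endpoint of $Q_{k-1}$ and from above by the same value on $Q_k$ yields $\rho^{2}\int_{Q_k}\phi\le 3\int_{Q_{k-1}}\phi W_\rho$ for every $k\ge1$, while a direct estimate handles the first half-period $[\rho,2\rho]$; summing gives $\rho^{2}\tilde N(x,\rho)\le C\int_0^\infty\phi W_\rho$ with a universal constant. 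Combining the two parts proves the comparison with constants independent of $x$, and hence the theorem. I expect this cross-band monotonicity estimate --- taming the weight $\rho^{2}$ of the tail at the gaps of the comb where $W_\rho$ vanishes --- to be the main obstacle; the bookkeeping that keeps the constants uniform in $x$ (via global boundedness of the quadruple) and the passage from the pointwise inequality to the $\sup_x$ and $\inf_x$ versions are then routine.
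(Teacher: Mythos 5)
Your proposal is correct in outline, but it reaches the two hard implications (\eqref{eq3.7}$\Rightarrow$\eqref{eq3.15} and \eqref{eq3.16}$\Rightarrow$\eqref{eq3.9}) by a genuinely different route than the paper. Both arguments first dispose of the easy directions via Theorem \ref{c3.6} and reduce \eqref{eq1.1}, \eqref{eq1.2} to the $M$-conditions via Proposition \ref{tm3.5}. The paper then stays close to the one-dimensional machinery: it truncates $\nu(x,dy)$ to $B_{u_0}^{c}(0)$, replaces the projected tail by a unimodal probability measure $\eta_U(x,du)$ (citing the proof of Theorem 3.9 and Theorem 3.1 of \cite{sandric-TAMS}), and exploits Khintchine's representation of unimodal laws ($\eta_U$ as the law of $U_xX_x$), the identity $\int(1-\cos\rho u)\,\eta_U(x,du)=\int(1-\tfrac{\sin\rho u}{\rho u})\,\eta(x,du)$, the bound $1-\tfrac{\sin u}{u}\geq\bar{c}\min\{1,u^{2}\}$, and Sato's Lemma 38.6. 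You instead compare the functionals $\int_0^{\rho}uN$ and $\int_0^{\rho}uM$ directly through the weight-function identities $\int_0^{\rho}u\tilde N\,du=\tfrac12\int\phi(s)\min\{s,\rho\}^{2}ds$ and $\int_0^{\rho}u\tilde M\,du=\tfrac12\int\phi(s)W_\rho(s)\,ds$ (both of which I checked are correct), and the cross-band monotonicity estimate; the key inequality $\rho^{2}\int_{Q_k}\phi\leq 2\rho^{3}\phi(2k\rho)\leq 3\,\phi(2k\rho)\int_{Q_{k-1}}W_\rho\leq 3\int_{Q_{k-1}}\phi W_\rho$ does go through for nonincreasing $\phi$. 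Your route is more elementary and self-contained (it never leaves the measure-theoretic side and avoids the external citations and the detour through unimodal probability laws), at the cost of the explicit weight-function analysis; the paper's route buys a shorter proof by recycling the one-dimensional results verbatim. Two pieces of bookkeeping you should make explicit: (a) for $k=1$ the band $Q_0=[0,2\rho]$ meets $[0,u_0]$, where $\phi$ vanishes rather than dominates $\phi(2\rho)$, so you must use $\int_{u_0}^{2\rho}W_\rho\geq\tfrac13\rho^{3}$ for $\rho\geq u_0$ (the constant merely degrades); and (b) the uniform bound $\sup_x\mu_0(x,\R)<\infty$ is only available \emph{after} the reduction to $\sup_x\nu(x,B_1(0))=0$, and that reduction perturbs $\int_0^{\rho}uN(x,u)\,du$ by an additive amount bounded by $\tfrac12\sup_x\int\min\{1,|y|^{2}\}\nu(x,dy)$, which must be absorbed into your constant $C'$; it is precisely here that the hypothesis $u_0>1$ is used, since it guarantees the truncation does not disturb the monotone density on $(u_0,\infty)$.
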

\begin{proof} According to  Theorem \ref{c3.6}, we only have to prove  that
\eqref{eq1.1} implies \eqref{eq3.15} and that \eqref{eq3.16} implies \eqref{eq1.2}.
First, we prove that \eqref{eq1.1} implies \eqref{eq3.15}.
Due to the radiality of the function $\xi\longmapsto q(x,\xi)$, \eqref{eq1.1} is equivalent to
\begin{equation}\label{eq3.19}\int_0^{r}\frac{\rho\, d\rho}{\sup_{x\in\R^{2}}q(x,(\rho,0))}=\infty\quad \textrm{for some (all)}\ r>0.\end{equation}
Further, we have
\begin{align*}1&\leq\liminf_{\rho\longrightarrow0}\frac{\sup_{x\in\R^{2}}q(x,(\rho,0))}{\sup_{x\in\R^{2}}\int_{\R^{2}}(1-\cos\langle(\rho,0),y\rangle)\nu(x,dy)}\\
&\leq\limsup_{\rho\longrightarrow0}\frac{\sup_{x\in\R^{2}}q(x,(\rho,0))}{\sup_{x\in\R^{2}}\int_{\R^{2}}(1-\cos\langle(\rho,0),y\rangle)\nu(x,dy)}\\
&\leq1+\limsup_{\rho\longrightarrow0}\frac{\frac{1}{2}\sup_{x\in\R^{2}}c(x)}{\sup_{x\in\R^{2}}\int_{\R^{2}}\frac{1-\cos\langle(\rho,0),y\rangle}{\rho^{2}}\nu(x,dy)}\\
&\leq1+2\frac{\sup_{x\in\R^{2}}c(x)}{\sup_{x\in\R^{2}}\int_{\R^{2}}|y|^{2}\nu(x,dy)},\end{align*}
which, together with \cite[Proposition 2.4]{sandric-TAMS}, yields that \eqref{eq3.19} is equivalent to
\begin{equation}\label{eq3.20} \int_0^{r}\frac{\rho\, d\rho}{\sup_{x\in\R^{2}}\int_{\R^{2}}(1-\cos\langle(\rho,0),y\rangle)\nu(x,dy)}=\infty\quad\textrm{for some (all)}\ r>0.\end{equation} Now,  define $\bar{\nu}(x,dy):=\nu(x,dy\cap B^{c}_{u_0}(0)),$ $x\in\R^{2}$. Obviously, $\bar{\nu}(x,dy)$ is rotationally invariant,  $\sup_{x\in\R^{2}}\bar{\nu}(x,\R^{2})<\infty$ and
$$\sup_{x\in\R^{2}}\int_{\R^{2}}|y|^{2}|\nu(x,dy)-\bar{\nu}(x,dy)|\leq\sup_{x\in\R^{2}}\int_{B_{u_0}(0)}|y|^{2}\nu(x,dy)<\infty.$$
Thus, by analogues arguments as in the proof of Theorem \ref{tm3.2}, it is easy to see that \eqref{eq3.20} is equivalent to
$$\int_0^{r}\frac{\rho\,d\rho}{\sup_{x\in\R^{2}}\int_{\R^{2}}(1-\cos\langle(\rho,0),y\rangle)\bar{\nu}(x,dy)}=\infty\quad\textrm{for some (all)}\ r>0,$$ that is,
\begin{equation}\label{eq3.21}
\int_0^{r}\frac{\rho\,d\rho}{\sup_{x\in\R^{2}}\int_{\R}(1-\cos\rho u)\bar{\nu}(x,du\times\R)}=\infty\quad \textrm{for some (all)}\ r>0.\end{equation}
 Next, due to  \cite[the proof of Theorem 3.9]{sandric-TAMS}, for every $x\in\R^{2}$ there exists an unimodal probability measure $\eta_U(x,du)$ on $\mathcal{B}(\R)$ such that $\bar{\nu}(x,(u,\infty)\times\R)=c\eta_U(x,(u,\infty))$ for all $x\in\R^{2}$ and all $u\geq u_0+1$, where   $c:=c(u_0)$ is an appropriate norming constant. Note that
$$\sup_{x\in\R^{2}}\int_{\R}u^{2}|\bar{\nu}(x,du\times\R)-c\eta_U(x,du)|<\infty.$$ According to this, \cite[Theorem 3.1]{sandric-TAMS} implies that \eqref{eq3.21} is equivalent to
\begin{equation}\label{eq3.22}
\int_0^{r}\frac{\rho\,d\rho}{\sup_{x\in\R^{2}}\int_{\R}(1-\cos\rho u)\eta_U(x,du)}=\infty\quad \textrm{for some (all)}\ r>0.\end{equation}
In the sequel we prove that \eqref{eq3.22} implies \eqref{eq3.15}.
First, since $\eta_U(x,du)$ is unimodal, by \cite[Exercise
29.21]{sato-book}, there exists a random variable $X_x$ such that
$\eta_U(x,du)$ is the distribution of the random variable $U_xX_x$,
where $U_x$ is  uniformly distributed random variable on $[0,1]$
independent of $X_x$. Further, let $\eta(x,du)$ be the
distribution of the random variable $X_x$. By \cite[Lemma
38.6]{sato-book},
$\eta(x,(u,\infty))\geq\eta_U(x,(u,\infty))$ for all
$x\in\R^{2}$ and all $u\geq0$.
  Now, we have
  $$\int_\R(1-\cos\rho u)\eta_U(x,du)=\int_0^{1}\int_{\R}(1-\cos(\rho uv))\eta(x,du)dv=\int_\R\left(1-\frac{\sin\rho u}{\rho u}\right)\eta(x,du).$$
 Next,    since $$1-\frac{\sin u}{u}\geq \bar{c}\min\{1, u^{2}\}$$ for
 all $u\in\R$ and all $0<\bar{c}<\frac{1}{6}$,
               $$\int_\R(1-\cos\rho u)\eta_U(x,du)\geq\bar{c}\int_\R\min\{1,\rho^{2} u^{2}\}\eta(x,du)=4\bar{c}\rho^{2}\int_0^{\frac{1}{|\rho|}}uE(x,u)du,$$
where $E(x,u):=\eta(x,(u,\infty))$ for $x\in\R^{2}$ and $u\geq0$. Set
$E_U(x,u):=\eta_U(x,(u,\infty))$ for $x\in\R^{2}$ and $u\geq0$.
Then, for any $r>0$, we have
\begin{align}\label{eq3.23}\int_{r}^{\infty}\left(\rho\sup_{x\in\R^{2}}\int_0^{\rho}uE_U(x,u)du\right)^{-1}d\rho&\geq\int_{r}^{\infty}\left(\rho\sup_{x\in\R^{2}}\int_0^{\rho}uE(x,u)du\right)^{-1}d\rho\nonumber\\&=\int_{0}^{\frac{1}{r}}\left(\rho\sup_{x\in\R^{2}}\int_0^{\frac{1}{\rho}}uE(x,u)du\right)^{-1}d\rho\nonumber\\&\geq4\bar{c}\int_{0}^{\frac{1}{r}}\frac{\rho \,d\rho}{\sup_{x\in\R^{2}}\int_{\R}(1-\cos\rho u)\eta_U(x,du)}.\end{align}
Further,
\begin{align*}&\lim_{\rho\longrightarrow\infty}\frac{\sup_{x\in\R^{2}}\int_{u_0}^{\rho}uN(x,u)du}{\sup_{x\in\R^{2}}\int_0^{u_0}uE_U(x,u)du+\frac{1}{c}\sup_{x\in\R^{2}}\int_{u_0}^{\rho}uN(x,u)du}\\&\leq\liminf_{\rho\longrightarrow\infty}\frac{\sup_{x\in\R^{2}}\int_0^{\rho}uN(x,u)du}{\sup_{x\in\R^{2}}\int_0^{\rho}uE_U(x,u)du}\\&\leq\limsup_{\rho\longrightarrow\infty}\frac{\sup_{x\in\R^{2}}\int_0^{\rho}uN(x,u)du}{\sup_{x\in\R^{2}}\int_0^{\rho}uE_U(x,u)du}\\&\leq\lim_{\rho\longrightarrow\infty}\frac{\sup_{x\in\R^{2}}\int_0^{u_0}uN(x,u)du+\sup_{x\in\R^{2}}\int_{u_0}^{\rho}uN(x,u)du}{\frac{1}{c}\sup_{x\in\R^{2}}\int_{u_0}^{\rho}uN(x,u)du}.\end{align*}
Now, if $\sup_{x\in\R^{2}}\int_{u_0}^{\infty}uN(x,u)du=0$ the desired
result trivially follows. On the other hand, if
$\sup_{x\in\R^{2}}\int_{u_0}^{\infty}uN(x,u)du>0$, we have
\begin{equation}\label{eq3.24}0<\liminf_{\rho\longrightarrow\infty}\frac{\sup_{x\in\R^{2}}\int_0^{\rho}uN(x,u)du}{\sup_{x\in\R^{2}}\int_0^{\rho}uE_U(x,u)du}\leq\limsup_{\rho\longrightarrow\infty}\frac{\sup_{x\in\R^{2}}\int_0^{\rho}uN(x,u)du}{\sup_{x\in\R^{2}}\int_0^{\rho}uE_U(x,u)du}<\infty,\end{equation}
which together with \eqref{eq3.23} proves the assertion.

Finally, we prove that
 \eqref{eq3.16} implies \eqref{eq1.2}. Due to the radiality of $\xi\longmapsto q(x,\xi)$ and \eqref{eq3.8}, by completely the same arguments as above,
$$4\bar{c}\int_{0}^{\frac{1}{r}}\frac{\rho\,d\rho}{\inf_{x\in\R^{2}}\int_{\R}(1-\cos\rho u)\eta_U(x,du)}\leq\int_{r}^{\infty}\left(\rho\inf_{x\in\R^{2}}\int_0^{\rho}uE_U(x,u)du\right)^{-1}d\rho.$$
Now, the desired result follows by a similar argumentation as in \eqref{eq3.24} and employing \eqref{eq3.8}.
\end{proof}
Note that the measure $\nu(x,du\times\R)$ will be quasi-unimodal uniformly in $x\in\R^{2}$ if there exists $u_0\geq0$ such that $\nu(x,dy)=n(x,|y|)dy$ on $\mathcal{B}(B^{c}_{u_0}(0))$ for some Borel function $n:\R^{2}\times(0,\infty)\longrightarrow(0,\infty)$ which is decreasing on $(u_0,\infty)$ for all $x\in\R^{2}.$ Also, let us remark
that
in the L\'evy process case the condition in  \eqref{eq3.8} will be satisfied if, and only if, $\int_{\R^{2}}|y|^{2}\nu(dy)=\infty.$ Recall that $\int_{\R^{2}}|y|^{2}\nu(dy)<\infty$ implies that the underlying L\'evy process is recurrent (see \cite[Theorem 2.8]{sandric-TAMS}).

Finally, as a direct consequence of  Theorem  \ref{tm3.7}, we  get  the following
characterization of the recurrence and transience  in terms of
 the tail behavior of the L\'evy measures.
\begin{theorem}\label{tm4}
Let $\process{F}$  be a
two-dimensional L\'evy-type process with   L\'evy measure $\nu(x,dy)$, satisfying the assumptions from Theorem  \ref{tm3.7}.
Then,
\eqref{eq1.1} holds true if, and only if,
\begin{equation}\label{eq5}\int_r^{\infty}\left(\rho\sup_{x\in\R^{2}}\int_0^{\rho}u\,\nu(x,B_u^{c}(0))du\right)^{-1}d\rho=\infty\quad\textrm{for some (all)}\ r>0,\end{equation}
and \eqref{eq1.2} holds true if, and
only if, \begin{equation}\label{eq6}\int_r^{\infty}\left(\rho\inf_{x\in\R^{2}}\int_0^{\rho}u\,\nu(x,B_u^{c}(0))du\right)^{-1}d\rho<\infty\quad\textrm{for some}\ r>0.\end{equation}
\end{theorem}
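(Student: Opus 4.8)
The plan is to deduce Theorem \ref{tm4} directly from Theorem \ref{tm3.7} by showing that the radial tail $\nu(x,B_u^{c}(0))$ and the one-dimensional tail $N(x,u)=\nu(x,(u,\infty)\times\R)$ enter the Chung--Fuchs-type integrals in a comparable way. Concretely, I would reduce everything to the two-sided bound
\[
\tfrac18\,\Psi(x,\rho)\ \le\ \Phi(x,\rho)\ \le\ \Psi(x,\rho),\qquad x\in\R^{2},\ \rho>0,
\]
where $\Phi(x,\rho):=\int_0^{\rho}u\,N(x,u)\,du$ and $\Psi(x,\rho):=\int_0^{\rho}u\,\nu(x,B_u^{c}(0))\,du$. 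Once this holds pointwise in $x$, passing to the supremum (respectively the infimum) over $x$ preserves the inequalities up to the fixed constants $\tfrac18$ and $1$, and taking reciprocals inside the outer integral $\int_r^{\infty}(\rho\,\cdot\,)^{-1}d\rho$ shows that \eqref{eq3.15} is equivalent to \eqref{eq5} and \eqref{eq3.16} is equivalent to \eqref{eq6}. Combined with the equivalences \eqref{eq1.1}$\Leftrightarrow$\eqref{eq3.15} and \eqref{eq1.2}$\Leftrightarrow$\eqref{eq3.16} supplied by Theorem \ref{tm3.7} (whose standing hypotheses are exactly the assumptions imposed here), this yields both claims.

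The two-sided bound I would obtain from elementary set inclusions that exploit the rotational invariance of $\nu(x,dy)$. Since the symbol is radial, all four closed half-planes $\{y:\pm y_1\ge u\}$ and $\{y:\pm y_2\ge u\}$ carry the same $\nu(x,\cdot)$-mass, and inside the $du$-integral this common mass may be taken to be $N(x,u)$, because the boundary $\{y_1=u\}$ has positive mass for at most countably many $u$ and hence contributes nothing to $\int_0^{\rho}u(\cdot)\,du$. The upper bound is then immediate from $\{y_1\ge u\}\subseteq B_u^{c}(0)$ for $u\ge0$, which gives $N(x,u)\le\nu(x,B_u^{c}(0))$ and hence $\Phi\le\Psi$. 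For the lower bound I would use the covering $B_{u\sqrt2}^{c}(0)\subseteq\{y_1\ge u\}\cup\{-y_1\ge u\}\cup\{y_2\ge u\}\cup\{-y_2\ge u\}$, valid because $|y|\ge u\sqrt2$ forces $\max_i|y_i|\ge u$; this gives $\nu(x,B_{u\sqrt2}^{c}(0))\le 4N(x,u)$. Substituting $v=u\sqrt2$ in $\int_0^{\rho}u\,\nu(x,B_{u\sqrt2}^{c}(0))\,du=\tfrac12\Psi(x,\rho\sqrt2)$ and using monotonicity of $\Psi$ in $\rho$ then produces $\Phi(x,\rho)\ge\tfrac18\Psi(x,\rho)$.

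An alternative, slightly more computational route would write $\nu(x,dy)$ in polar form as $\mu_x(dr)\otimes\frac{d\theta}{2\pi}$, so that $\nu(x,B_u^{c}(0))=\mu_x((u,\infty))$ and $N(x,u)=\int_{(u,\infty)}\pi^{-1}\arccos(u/r)\,\mu_x(dr)$; after Tonelli the comparison reduces to the single-variable estimate $\tfrac14\le 2g(t)/(\pi t^{2})\le\tfrac12$ on $(0,1]$ for $g(t)=\int_0^{t}s\arccos s\,ds$, which follows since $g(t)/t^{2}$ decreases from $\tfrac{\pi}{4}$ (at $0^{+}$) to $\tfrac{\pi}{8}$ (at $t=1$); indeed its derivative has the sign of $t^{2}\arccos t-2g(t)\le0$, using the crude bound $g(t)\ge\tfrac12 t^{2}\arccos t$. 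I expect the main (and essentially the only) obstacle to be precisely this lower bound: a naive pointwise estimate $N(x,u)\ge c\,\nu(x,B_u^{c}(0))$ fails, since the half-plane mass is negligible relative to the ball-complement mass near the diagonal $u\approx r$, and it is the device of shifting the radius by $\sqrt2$ (equivalently, the integrated $\arccos$ computation) that repairs this. The remaining passages---interchanging the order of integration, taking sup and inf, and reciprocating inside the outer integral---are routine.
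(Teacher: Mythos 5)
Your proposal is correct and follows essentially the same route as the paper: the paper also reduces Theorem \ref{tm4} to Theorem \ref{tm3.7} via the two-sided comparison $\frac{1}{4}\nu(x,B_{\sqrt{2}u}^{c}(0))\leq N(x,u)\leq\nu(x,B_{u}^{c}(0))$, which is exactly the bound you derive from the half-plane inclusion and the four-half-plane covering of $B_{u\sqrt{2}}^{c}(0)$ together with rotational invariance. You merely spell out the integration, change of variables and sup/inf steps that the paper leaves implicit in its one-line proof.
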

\begin{proof}
The assertion is a direct consequence of the following simple fact $$\frac{1}{4}\nu(x,B_{\sqrt{2}u}^{c}(0))\leq N(x,u)\leq\nu(x,B_u^{c}(0)),\quad x\in\R^{2},\ u>0.$$
\end{proof}

\begin{proposition}\label{p5}Let  $\process{F}$  be a
two-dimensional  L\'evy-type process with   L\'evy measure  $\nu(x,dy)$.
Then,
\eqref{eq5} holds  if
$$\int_{r}^{\infty}\left(\rho^{3}\sup_{x\in\R^{2}}\nu(x,B^{c}_\rho(0))+\rho\sup_{x\in\R^{2}}\int_{B_\rho(0)}|y|^{2}\,\nu(x,dy)\right)^{-1}d\rho=\infty\quad\textrm{for some}\ r>0,$$ and \eqref{eq6} holds  if $$\int_{r}^{\infty}\left(\rho^{3}\inf_{x\in\R^{2}}\nu(x,B^{c}_\rho(0))+\rho\inf_{x\in\R^{2}}\int_{B_\rho(0)}|y|^{2}\,\nu(x,dy)\right)^{-1}d\rho<\infty\quad\textrm{for some}\ r>0.$$ In particular, \eqref{eq6} holds if either one of the following  conditions holds
$$\int_{r}^{\infty}\left(\rho^{3}\inf_{x\in\R^{2}}\nu(x,B^{c}_\rho(0))\right)^{-1}d\rho<\infty\quad\textrm{for some}\ r>0$$ or
$$\int_{r}^{\infty}\left(\rho\inf_{x\in\R^{2}}\int_{B_\rho(0)}|y|^{2}\,\nu(x,dy)\right)^{-1}d\rho<\infty\quad\textrm{for some}\ r>0.$$
In addition, if $\nu(x,dy)$ is of the form  $\nu(x,dy)=n(x,|y|)dy$, where $n:\R^{2}\times(0,\infty)\longrightarrow(0,\infty)$ is a Borel function, and
there exists $u_0\geq0$ such that $n(x,u)$ is decreasing on $(u_0,\infty)$ for all $x\in\R^{2}$, then \eqref{eq6} holds if
$$\int_{r}^{\infty}\frac{du}{u^{5}\inf_{x\in\R^{2}}n(x,u)}<\infty\quad \textrm{for some}\ r\geq u_0.$$
\end{proposition}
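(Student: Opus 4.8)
The plan is to reduce everything to a single exact identity for the inner integral appearing in \eqref{eq5} and \eqref{eq6}. First I would write $\nu(x,B_u^{c}(0))=\int_{\R^{2}}1_{\{|y|\geq u\}}\,\nu(x,dy)$ and apply Tonelli's theorem (all integrands are nonnegative) to interchange the $u$- and $y$-integrations:
\[\int_0^{\rho}u\,\nu(x,B_u^{c}(0))\,du=\int_{\R^{2}}\left(\int_0^{\min\{\rho,|y|\}}u\,du\right)\nu(x,dy)=\frac{1}{2}\int_{\R^{2}}\min\{\rho,|y|\}^{2}\,\nu(x,dy).\]
Splitting the last integral according to whether $|y|\leq\rho$ or $|y|>\rho$ yields the exact formula
\[\int_0^{\rho}u\,\nu(x,B_u^{c}(0))\,du=\frac{1}{2}\left(\rho^{2}\nu(x,B_\rho^{c}(0))+\int_{B_\rho(0)}|y|^{2}\,\nu(x,dy)\right),\quad x\in\R^{2},\ \rho>0.\]
This identity is the heart of the proof; once it is in hand, the remaining steps are elementary comparisons.

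Next I would pass to suprema and infima. Multiplying by $\rho$ and using the subadditivity of the supremum, $\sup_x(f+g)\leq\sup_x f+\sup_x g$, gives
\[\rho\sup_{x\in\R^{2}}\int_0^{\rho}u\,\nu(x,B_u^{c}(0))\,du\leq\frac{1}{2}\left(\rho^{3}\sup_{x\in\R^{2}}\nu(x,B_\rho^{c}(0))+\rho\sup_{x\in\R^{2}}\int_{B_\rho(0)}|y|^{2}\,\nu(x,dy)\right),\]
so the reciprocal of the left-hand side dominates, up to the factor $2$, the reciprocal of the right-hand side; integrating in $\rho$ and invoking the divergence hypothesis shows the integral in \eqref{eq5} diverges. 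The inf-statement is completely symmetric: here I would use the superadditivity of the infimum, $\inf_x(f+g)\geq\inf_x f+\inf_x g$, to obtain the reverse inequality
\[\rho\inf_{x\in\R^{2}}\int_0^{\rho}u\,\nu(x,B_u^{c}(0))\,du\geq\frac{1}{2}\left(\rho^{3}\inf_{x\in\R^{2}}\nu(x,B_\rho^{c}(0))+\rho\inf_{x\in\R^{2}}\int_{B_\rho(0)}|y|^{2}\,\nu(x,dy)\right),\]
so convergence of the stated integral forces convergence of the integral in \eqref{eq6}.

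The two ``in particular'' criteria for \eqref{eq6} then follow at once, since both summands $\rho^{3}\inf_x\nu(x,B_\rho^{c}(0))$ and $\rho\inf_x\int_{B_\rho(0)}|y|^{2}\,\nu(x,dy)$ are nonnegative: discarding either one only enlarges the reciprocal, so finiteness of either single integral implies finiteness of the combined one. For the final assertion, under $\nu(x,dy)=n(x,|y|)\,dy$ with $n(x,\cdot)$ decreasing on $(u_0,\infty)$, I would bound the tail from below in polar coordinates. For $\rho\geq u_0$, monotonicity of $n(x,\cdot)$ on $(\rho,2\rho)\subseteq(u_0,\infty)$ gives
\[\nu(x,B_\rho^{c}(0))=2\pi\int_\rho^{\infty}s\,n(x,s)\,ds\geq2\pi\,n(x,2\rho)\int_\rho^{2\rho}s\,ds=3\pi\rho^{2}n(x,2\rho),\]
whence $\rho^{3}\inf_x\nu(x,B_\rho^{c}(0))\geq3\pi\rho^{5}\inf_x n(x,2\rho)$; after the substitution $u=2\rho$ the first ``in particular'' integral is controlled by a constant multiple of $\int_{2r}^{\infty}\big(u^{5}\inf_x n(x,u)\big)^{-1}du$, finite by hypothesis.

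I expect no serious obstacle: the only genuinely creative step is spotting the exact Tonelli identity, after which each sufficient condition drops out by an elementary comparison. The single point needing care is the monotonicity bound in the last part, where the integration range $(\rho,2\rho)$ must be kept inside the region $(u_0,\infty)$ on which $n(x,\cdot)$ is decreasing; this is precisely why the hypothesis is phrased for some $r\geq u_0$.
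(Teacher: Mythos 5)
Your proof is correct, and its core is the same as the paper's: everything rests on the exact identity
\begin{equation*}
\int_0^{\rho}u\,\nu(x,B^{c}_u(0))\,du=\tfrac{1}{2}\rho^{2}\nu(x,B^{c}_\rho(0))+\tfrac{1}{2}\int_{B_\rho(0)}|y|^{2}\,\nu(x,dy),
\end{equation*}
followed by sub/superadditivity of $\sup$/$\inf$ and monotonicity of the reciprocal integrals. You differ from the paper in two minor but genuine ways. First, you obtain the identity in one stroke by Tonelli (integrating $u\,1_{\{|y|\geq u\}}$ and splitting $\min\{\rho,|y|\}^{2}$), whereas the paper derives it by integration by parts on $(\varepsilon,\rho)$, obtaining a two-sided bound and then letting $\varepsilon\to0$; your route is cleaner and avoids the limiting argument. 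Second, for the final assertion you feed the decreasing-density hypothesis into the \emph{tail} criterion, bounding $\nu(x,B^{c}_\rho(0))\geq 3\pi\rho^{2}n(x,2\rho)$ on $(\rho,2\rho)$ and substituting $u=2\rho$; the paper instead uses the \emph{truncated second moment} criterion, bounding $\int_{B_\rho(0)}|y|^{2}\nu(x,dy)\geq\tfrac{\pi}{2}(\rho^{4}-u_0^{4})n(x,\rho)$ and absorbing the factor $(\rho^{4}-u_0^{4})^{-1}$ into a constant for $\rho\geq r>u_0$. Both yield the stated integral test; your version sidesteps the constant that degenerates as $r\downarrow u_0$ at the cost of evaluating $n$ at $2\rho$ and shifting the lower limit to $2r$ (harmless, since finiteness of $\int_r^{\infty}$ implies finiteness of $\int_{2r}^{\infty}$). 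You also correctly flag the one delicate point, namely keeping the interval $(\rho,2\rho)$ inside the region of monotonicity.
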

\begin{proof}
 By employing the integration by parts formula, for any $x\in\R^{2}$, any $\rho>0$ and any $0<\varepsilon<\rho$, we have
\begin{align*}&\frac{\rho^{2}}{2}\nu(x,B^{c}_\rho(0))+\frac{1}{2}\int_{B_\rho(0)}|y|^{2}\,\nu(x,dy)\\
&\geq\int_0^{\rho}u\,\nu(x,B^{c}_u(0))du\\&=\int_0^{\varepsilon}u\,\nu(x,B^{c}_u(0))du+\int_{\varepsilon}^{\rho}u\,\nu(x,B^{c}_u(0))du\\
&\geq\frac{\varepsilon^{2}}{2}\nu(x,B^{c}_{\varepsilon}(0))+\frac{\rho^{2}}{2}\nu(x,B^{c}_\rho(0))-\frac{\varepsilon^{2}}{2}\nu(x,B^{c}_{\varepsilon}(0))+\frac{1}{2}\int_{B_\rho(0)\cap´B^{c}_{\varepsilon}(0)}|y|^{2}\,\nu(x,dy)\\
&=\frac{\rho^{2}}{2}\nu(x,B^{c}_\rho(0))+\frac{1}{2}\int_{B_\rho(0)\cap´B^{c}_{\varepsilon}(0)}|y|^{2}\,\nu(x,dy).\end{align*} Now, by letting $\varepsilon\longrightarrow0$, Fatou's lemma yields $$\int_0^{\rho}u\,\nu(x,B^{c}_u(0))du=\frac{\rho^{2}}{2}\nu(x,B^{c}_\rho(0))+\frac{1}{2}\int_{B_\rho(0)}|y|^{2}\,\nu(x,dy).$$

In order to prove the last assertion, observe that for any $r>u_0$,  \begin{align*}\int_{r}^{\infty}\left(\rho\inf_{x\in\R^{2}}\int_{B_\rho(0)}|y|^{2}\,\nu(x,dy)\right)^{-1}d\rho&\leq\frac{1}{2\pi}\int_{r}^{\infty}\left(\rho\inf_{x\in\R^{2}}\int_{u_0}^{\rho}u^{3}n(x,u)du\right)^{-1}d\rho\\&\leq\frac{2}{\pi}\int_{r}^{\infty}\frac{d\rho}{\rho(\rho^{4}-u_0^{4})\inf_{x\in\R^{2}}n(x,\rho)}\\&\leq
c\int_{r}^{\infty}\frac{d\rho}{\rho^{5}\inf_{x\in\R^{2}}n(x,\rho)},\end{align*}
where in the second step we employed the fact that $n(x,u)$ is decreasing  on $(u_0,\infty)$ and $c>\frac{2r^{4}}{\pi(r^{4}-u_0^{4})}$ is
arbitrary.
\end{proof}
Observe  that from the previous proposition we again conclude that  if $\sup_{x\in\R^{2}}\int_{\R^{2}}|y|^{2}\nu(x,dy)<\infty$, then $\nu(x,dy)$ automatically satisfies \eqref{eq5}. Let us now give some applications of the results presented above.

\begin{example}\label{e1}{\rm
 Let $\alpha:\R^{2}\longrightarrow(0,2)$ and
$\beta:\R^{2}\longrightarrow(0,\infty)$ be arbitrary bounded and
continuously differentiable functions with bounded derivatives, such
that
$0<\inf_{x\in\R^{2}}\alpha(x)\leq\sup_{x\in\R^{2}}\alpha(x)<2$
and $\inf_{x\in\R^{2}}\beta(x)>0$. Under this assumptions, in
\cite{bass-stablelike}, \cite[Theorem 5.1]{Kolokoltsov-2000} and
                                                         \cite[Theorem 3.3.]{rene-wang-feller}
                                                         it has been shown
                                                         that there
                                                         exists a
                                                         unique open-set irreducible L\'evy-type
                                                         process $\process{F}$,
                                                         called a (two-dimensional)
                                                         \emph{stable-like
                                                         process},
                                                         determined
by a L\'evy triplet and symbol of the form $(0,0,\beta(x)|y|^{-2-\alpha(x)}dy)$ and
$q(x,\xi)=\gamma(x)|\xi|^{\alpha(x)}$, respectively, where
$$\gamma(x):=\beta(x)\frac{\pi^{1/2}\Gamma(1-\alpha(x)/2)}{\alpha(x)
                 2^{\alpha(x)-1}\Gamma((\alpha(x)+1)/2)},\quad x\in\R^{2}.$$ Here, $\Gamma(x)$ denotes the Gamma function.
Note that when
$\alpha(x)$ and $\beta(x)$ are constant functions, then we deal
with a rotationally invariant two-dimensional stable L\'evy process.
 Now, by a direct application of the Chung-Fuchs type condition in \eqref{eq1.2} we easily see that $\process{F}$ is transient. On the other hand,  the corresponding L\'evy measure also satisfies all the assumptions from Theorem \ref{tm4}, which  again implies the transience of $\process{F}$.
For more on stable-like processes and their recurrence and transience properties we refer the readers to \cite{bass-stablelike}, \cite{bjoern-overshoot}, \cite{franke-periodic, franke-periodicerata},
  \cite{sandric-spa}, \cite{sandric-TAMS} and \cite{rene-wang-feller}.}
\end{example}

\begin{example}\label{e2}{\rm Let $\alpha,\beta:\R^{2}\longrightarrow(0,\infty)$ and $\gamma:\R^{2}\longrightarrow\R$ be arbitrary bounded and
continuous functions such
that
$\inf_{x\in\R^{2}}\alpha(x)>0$ and $\inf_{x\in\R^{2}}\beta(x)>0$. Define $n:\R^{2}\times(0,\infty)\longrightarrow(0,\infty)$ by $$n(x,u):=\frac{\beta(x)\ln^{\gamma(x)}u}{u^{2+\alpha(x)}}1_{\{v:v\geq e\}}(u).$$ Because of the continuity of $\alpha(x)$, $\beta(x)$ and $\gamma(x)$, without loss of generality, we can assume that $\int_{\R^{2}}n(x,|y|)dy=1$ for all $x\in\R^{2}.$ Now, by (a straightforward adaptation of) \cite[Proposition 2.9]{Sandric-JOTP-2014}, there exists a unique open-set irreducible L\'evy-type process $\process{F}$ determined by a L\'evy measure and symbol of the form $\nu(x,dy):=n(x,|y|)dy$ and   $q(x,\xi):=\int_{\R^{2}}(1-\cos\langle\xi,y\rangle)\nu(x,dy)$, respectively.  Put $\underline{\alpha}:=\inf_{x\in\R^{2}}\alpha(x)\leq\sup_{x\in\R^{2}}\alpha(x)=:\overline{\alpha}$, $\underline{\beta}:=\inf_{x\in\R^{2}}\beta(x)\leq\sup_{x\in\R^{2}}\beta(x)=:\overline{\beta}$ and
 $\underline{\gamma}:=\inf_{x\in\R^{2}}\gamma(x)\leq\sup_{x\in\R^{2}}\gamma(x)=:\overline{\gamma}.$
  Then,
 \begin{enumerate}
   \item [(i)] if $\underline{\alpha}>2$ (which automatically implies that $\sup_{x\in\R^{2}}\int_{\R^{2}}|y|^{2}\nu(x,dy)
<\infty$),  a  direct application of the Chung-Fuchs type  condition in \eqref{eq1.1} (or Proposition \ref{p5})   entails the recurrence of $\process{F}$.
   \item [(ii)] if $\overline{\alpha}<2$, since $$n(x,u)\geq\frac{\underline{\beta}}{u^{2+\overline{\alpha}+\varepsilon}}1_{\{v:v\geq e\}}(u)$$ for some $0<\varepsilon<2-\overline{\alpha}$, all $x\in\R^{2}$ and all $u>0$ large enough,
   Theorem \ref{tm3.2}  and Example \ref{e1} (or Proposition \ref{p5})  imply that $\process{F}$ is transient.
 \end{enumerate}
  On the other hand, in order to conclude the recurrence or transience  of  $\process{F}$ in the cases when  $\underline{\alpha}=2$ or $\overline{\alpha}=2$,
 it is not immediately  clear how to (explicitly) compute or (appropriately) bound its symbol
and apply the Chung-Fuchs type conditions. However, since $\nu(x,dy)$ obviously satisfies all the assumptions of Theorem \ref{tm4},  we conclude that
\begin{enumerate}
\item [(iii)] if $\underline{\alpha}\geq2$ and $\overline{\gamma}\leq0$,  then
  $$\int_r^{\infty}\left(\rho\sup_{x\in\R^{2}}\int_0^{\rho}u\,\nu(x,B^{c}_u(0))\,du\right)^{-1}d\rho\geq c(\underline{\alpha},\overline{\beta})\int_r^{\infty}\frac{d\rho}{\rho\ln\rho},\quad r\geq e,$$ which  entails the recurrence of $\process{F}.$
\item [(iv)]if $\overline{\alpha}\leq2$ and $\underline{\gamma}>0$, then
  $$\int_r^{\infty}\left(\rho\inf_{x\in\R^{2}}\int_0^{\rho}u\,\nu(x,B^{c}_u(0))\,du\right)^{-1}d\rho\leq c(\overline{\alpha},\underline{\beta},\underline{\gamma})\int_{ r}^{\infty}\frac{d\rho}{\rho\ln^{\underline{\gamma}+1}\rho},\quad r\geq e,$$which implies that $\process{F}$ is transient.
\end{enumerate}
}\end{example}
\begin{example}\label{e3}{\rm
Let $\process{L}$ be a L\'evy process with L\'evy measure of the form $\nu(dy)=n(|y|)dy$, where $n:(0,\infty)\longrightarrow(0,\infty)$ is a decreasing (on $(u_0,\infty)$ for some $u_0\geq0$) and regularly varying function with index $\delta\leq-2$ (that is, $\lim_{u\longrightarrow\infty}n(\lambda u)/n(u)=\lambda^{\delta}$ for all $\lambda>0$). Observe that, due to \cite[Theorem 1.5.11]{goldie}, for any $-4\leq\delta\leq-2$, $$\lim_{\rho\longrightarrow\infty}\frac{\nu(B^{c}_\rho(0))}{\rho^{2}n(\rho)}=\frac{1}{2\pi(-2-\delta)}\quad\textrm{and}\quad \lim_{\rho\longrightarrow\infty}\frac{\int_{B_\rho(0)}|y|^{2}\nu(dy)}{\rho^{4}n(\rho)}=\frac{1}{2\pi(4+\delta)}.$$ Consequently,  Proposition \ref{p5} and \cite[Proposition 1.3.6]{goldie} yield that
\begin{itemize}
  \item[(i)]  if $\delta<-4$, then $\process{L}$ is recurrent.
  \item [(ii)] if $-4<\delta\leq-2$, then $\process{L}$ is transient.
  \item [(iii)] if $\delta=-4$, then $\process{L}$ is transient if $$\int_r^{\infty}\frac{d\rho}{\rho^{5}n(\rho)}<\infty\quad\textrm{for some}\ r>0.$$
\end{itemize}

}\end{example}

\section{Comparison of L\'evy and L\'evy-Type Processes}\label{s5}
In this section, we provide some comparison conditions for the recurrence and transience in terms of the L\'evy measures. Again, we assume that the symbol $q(x,\xi)$ of a two-dimensional L\'evy-type process $\process{F}$ is radial in the co-variable.
\begin{theorem}  \label{tm3.10}
Let $\process{F}$ and  $\process{\bar{F}}$ be  two-dimensional
L\'evy-type processes with  symbols $q(x,\xi)$ and
$\bar{q}(x,\xi)$ and  L\'evy measures $\nu(x,dy)$ and
$\bar{\nu}(x,dy)$, respectively. Assume  that
\begin{enumerate}
\item [(i)]$\nu(x,du\times\R)$ is quasi-unimodal uniformly in $x\in\R^{2}$;
\item [(ii)] there exists $u_0\geq0$ such that  $\nu\left(x,B^{c}_u(0)\right)\geq\bar{\nu}\left(x,B^{c}_u(0)\right)$ (or $\nu\left(x,(u,\infty)\times\R\right)\geq\bar{\nu}\left(x,(u,\infty)\times\R\right)$) for all $x\in\R^{2}$ and all $u\geq u_0$.
 \end{enumerate}   Then,
$$\int_{B_r(0)}\frac{d\xi}{\sup_{x\in\R^{2}}q(x,\xi)}=\infty\quad \textrm{for all}\ r>0$$
implies
$$\int_{B_r(0)}\frac{d\xi}{\sup_{x\in\R^{2}}\bar{q}(x,\xi)}=\infty\quad \textrm{for all}\ r>0.$$
In addition, if $q(x,\xi)$ satisfies \eqref{eq3.8}, then
$$\int_{B_r(0)}\frac{d\xi}{\inf_{x\in\R}\bar{q}(x,\xi)}<\infty\quad \textrm{for some}\ r>0$$
implies
$$\int_{B_r(0)}\frac{d\xi}{\inf_{x\in\R}q(x,\xi)}<\infty\quad \textrm{for some}\ r>0.$$
\end{theorem}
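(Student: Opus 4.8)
The plan is to transport both assertions to the purely measure-theoretic tail conditions of Section~\ref{s4} and to exploit that these are monotone in the tail of the L\'evy measure. The guiding principle is that a larger L\'evy tail makes a process ``more transient'': hypothesis~(ii) says that $\nu$ dominates $\bar\nu$ far from the origin, so recurrence should pass from $F$ (larger tail) to $\bar F$, and transience from $\bar F$ (smaller tail) to $F$, which is exactly the asserted direction. The structural point I would use is that the tail characterisations are available in \emph{both} directions only for the process whose L\'evy measure is quasi-unimodal, namely $F$: by Theorem~\ref{tm3.7} (equivalently Theorem~\ref{tm4}), hypothesis~(i) gives \eqref{eq1.1}$\Leftrightarrow$\eqref{eq3.15} and, under \eqref{eq3.8}, \eqref{eq1.2}$\Leftrightarrow$\eqref{eq3.16} for $\nu$. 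For $\bar F$ I only have the one-sided implications of Theorem~\ref{c3.6} and Proposition~\ref{tm3.5}, and these turn out to be precisely the directions I need.

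For the recurrence assertion I would argue as follows. Recurrence of $F$ is \eqref{eq1.1} for $q$; by Theorem~\ref{tm3.7} this is equivalent to \eqref{eq3.15} for $\nu$, i.e.\ $\int_r^\infty(\rho\sup_{x}\int_0^\rho uN_\nu(x,u)\,du)^{-1}d\rho=\infty$ with $N_\nu(x,u)=\nu(x,(u,\infty)\times\R)$. Splitting $\int_0^\rho=\int_0^{u_0}+\int_{u_0}^\rho$ and using (ii) together with the global boundedness of the L\'evy quadruple (so that $C:=\sup_x\int_0^{u_0}uN_{\bar\nu}(x,u)\,du<\infty$) gives, for every $\rho$, $\sup_x\int_0^\rho uN_\nu(x,u)\,du\ge \sup_x\int_0^\rho uN_{\bar\nu}(x,u)\,du-C$. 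A one-line lemma then shows that for nondecreasing eventually-positive $g$ the integrals $\int_r^\infty(\rho g(\rho))^{-1}d\rho$ and $\int_r^\infty(\rho(g(\rho)+C))^{-1}d\rho$ diverge together (the additive constant is harmless whether $g$ stays bounded or tends to infinity), so \eqref{eq3.15} holds for $\bar\nu$. Finally Theorem~\ref{c3.6} gives \eqref{eq3.15}$\Rightarrow$\eqref{eq3.7} and Proposition~\ref{tm3.5} gives \eqref{eq3.7}$\Leftrightarrow$\eqref{eq1.1}, whence $\bar q$ satisfies \eqref{eq1.1} and $\bar F$ is recurrent. This last leg needs no quasi-unimodality of $\bar\nu$, which is why (i) is imposed only on $\nu$.

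For the transience assertion the two end legs are the mirror image. By Theorem~\ref{tm3.7}, using (i) and \eqref{eq3.8} for $q$, it suffices to establish \eqref{eq3.16} for $\nu$; and by (ii) and the same additive-constant bookkeeping (now with $\inf_x$) it suffices to establish \eqref{eq3.16} for $\bar\nu$, since convergence forces $\inf_x\int_0^\rho uN_{\bar\nu}\,du\to\infty$ and hence $\inf_x\int_0^\rho uN_\nu\,du\ge\tfrac12\inf_x\int_0^\rho uN_{\bar\nu}\,du$ for large $\rho$. Thus everything reduces to ``$\bar F$ transient $\Rightarrow$ \eqref{eq3.16} for $\bar\nu$'', and I expect this to be the main obstacle, because the clean tool (Proposition~\ref{tm3.5}, $\eqref{eq1.2}\Leftrightarrow\eqref{eq3.9}$) requires \eqref{eq3.8} for $\bar q$, which is \emph{not} assumed. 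I would bypass it directly: by radiality, transience is $\int_0^r\rho\,(\inf_x\bar q(x,(\rho,0)))^{-1}d\rho<\infty$, and the upper estimate in \eqref{eq3.12} together with $M\le N$ yields, after choosing near-infimizers in $x$,
\[\inf_{x\in\R^2}\bar q(x,(\varrho,0))\le \tfrac12\Big(\sup_{x\in\R^2}\bar c(x)\Big)\varrho^2+4\varrho^2\inf_{x\in\R^2}\int_0^{\pi/(2\varrho)}uN_{\bar\nu}(x,u)\,du.\]
Feeding this into the convergent integral and substituting $\varrho\mapsto\pi/\rho$ produces $\int^\infty(\rho[\,\mathrm{const}+4\inf_x\int_0^{\rho/2}uN_{\bar\nu}\,du\,])^{-1}d\rho<\infty$; its convergence forces the jump term to swamp the constant Gaussian term $\tfrac12\sup_x\bar c(x)$ for large $\rho$, so that constant may be absorbed, and a final rescaling $\sigma=\rho/2$ delivers \eqref{eq3.16} for $\bar\nu$. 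In effect, transience of $\bar F$ makes the Gaussian part negligible on its own, which is exactly the role \eqref{eq3.8} would otherwise play.

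I would close by recording the elementary lemma used above and by remarking that if hypothesis~(ii) is supplied in the ball form $\nu(x,B_u^c(0))\ge\bar\nu(x,B_u^c(0))$, one runs the identical argument with the ball-tail conditions \eqref{eq5} and \eqref{eq6} replacing \eqref{eq3.15} and \eqref{eq3.16}, invoking Theorem~\ref{tm4} and the sandwich $\tfrac14\nu(x,B_{\sqrt{2}u}^c(0))\le N(x,u)\le\nu(x,B_u^c(0))$ to pass freely between the strip and the ball versions.
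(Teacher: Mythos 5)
Your proposal is correct, and it is worth comparing with the paper's own proof, which consists of the single sentence that the theorem ``is a direct consequence of Theorems \ref{tm3.7} and \ref{tm4}.'' Your overall strategy is exactly the intended one -- transport recurrence/transience to the tail conditions \eqref{eq3.15}/\eqref{eq3.16} (or \eqref{eq5}/\eqref{eq6}) and use monotonicity of the tails under hypothesis (ii), with the additive constant over $[0,u_0]$ absorbed by the one-line lemma -- but your careful bookkeeping exposes something the bare citation glosses over: Theorems \ref{tm3.7} and \ref{tm4} give the two-sided equivalences only for the process whose L\'evy measure is quasi-unimodal and (for transience) satisfies \eqref{eq3.8}, i.e.\ only for $F$, whereas the argument also needs the legs ``\eqref{eq3.15} for $\bar\nu$ $\Rightarrow$ \eqref{eq1.1} for $\bar q$'' and ``\eqref{eq1.2} for $\bar q$ $\Rightarrow$ \eqref{eq3.16} for $\bar\nu$'' with no such hypotheses on $\bar F$. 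You correctly observe that the first of these is unconditional (Theorem \ref{c3.6} plus the first part of Proposition \ref{tm3.5}), and for the second you supply a direct derivation from the upper half of \eqref{eq3.12} combined with $M\le N$, letting the convergence of the Chung--Fuchs integral itself force $\inf_x\int_0^\rho uN_{\bar\nu}(x,u)\,du\to\infty$ so that the Gaussian term can be absorbed -- precisely the role \eqref{eq3.8} plays when it is available. This patch is sound (the near-infimizer step $\inf_x(A+B)\le\sup_xA+\inf_xB$ and the substitution $\varrho\mapsto\pi/\rho$ both check out), and it makes explicit a step that the paper's one-line proof does not justify; your closing remark on passing between the strip and ball tail versions via $\tfrac14\nu(x,B^{c}_{\sqrt{2}u}(0))\le N(x,u)\le\nu(x,B^{c}_u(0))$ is also the right way to handle the two alternative forms of hypothesis (ii).
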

\begin{proof}
The assertion of the theorem is a direct consequence of Theorems \ref{tm3.7} and \ref{tm4}.
\end{proof}

\begin{corollary}\label{c3.11}
 Let $\process{F}$  be a
two-dimensional L\'evy-type process with  symbol
$q(x,\xi)$ and  L\'evy measure $\nu(x,dy)$. Assume that
there exists $x_0\in\R^{2}$ such that
\begin{enumerate}
\item [(i)] $\sup_{x\in\R^{2}}q(x,\xi)=q(x_0,\xi)$ for all $|\xi|$ small enough;
              \item [(ii)] there exists a
two-dimensional  L\'evy process  $\process{L}$ with
symbol $q(\xi)$ and  L\'evy measure $\nu(dy)$, such that $q(\xi)$ is radial,
$\nu(du\times\R)$ is quasi-unimodal and    $\nu\left(x,B^{c}_u(0)\right)\geq\nu\left(x_0,B^{c}_u(0)\right)$ (or $\nu((u,\infty)\times\R)\geq\nu(x_0,(u,\infty)\times\R)$) for  all $u\geq u_0$, for  some $u_0\geq0$.
 \end{enumerate}
 Then, the recurrence property of $\process{L}$ implies
\eqref{eq1.1}.
 Further, if  there exists $x_0\in\R^{2}$ such that
\begin{enumerate}
\item [(i)] $\inf_{x\in\R^{2}}q(x,\xi)=q(x_0,\xi)$ for all $|\xi|$ small enough;
              \item [(ii)] the  measure $\nu(x_0,du\times\R)$ is
              quasi-unimodal and $\int_{\R^{2}}|y|^{2}\nu(x_0,dy)=\infty$;
              \item [(iii)] there exists a
two-dimensional L\'evy process  $\process{L}$ with
symbol $q(\xi)$ and  L\'evy measure $\nu(dy)$, such that $q(\xi)$ is radial and
 $\nu\left(x_0,B^{c}_u(0)\right)\geq\nu\left(x,B^{c}_u(0)\right)$ (or $\nu(x_0,(u,\infty)\times\R)\geq\nu((u,\infty)\times\R)$) for   all $u\geq u_0$, for  some $u_0\geq0$,
 \end{enumerate}
then the transience property of $\process{L}$ implies
\eqref{eq1.2}.
\end{corollary}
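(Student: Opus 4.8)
The plan is to read Corollary~\ref{c3.11} as the specialization of the comparison Theorem~\ref{tm3.10} in which one of the two processes is $\process{F}$ with its symbol frozen at the single point $x_0$ and the other is the auxiliary Lévy process $\process{L}$ (regarded as a constant-coefficient Lévy-type process). The role of hypothesis~(i) is to turn the $x$-dependent Chung--Fuchs integrals into integrals built from a genuine Lévy characteristic exponent. Concretely, for the first assertion I would use~(i) to fix $r>0$ small enough that $\sup_{x\in\R^{2}}q(x,\xi)=q(x_0,\xi)$ for every $\xi\in B_r(0)$; since under the radiality assumption $q(x,\xi)$ is real and nonnegative and the condition \eqref{eq1.1} is independent of $r$, \eqref{eq1.1} is then equivalent to $\int_{B_r(0)}d\xi/q(x_0,\xi)=\infty$, i.e.\ to the recurrence of the Lévy process with characteristic exponent $q(x_0,\cdot)$ and Lévy measure $\nu(x_0,dy)$.

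I would then apply Theorem~\ref{tm3.10} with $\process{L}$ in the role of the process carrying the \emph{larger} Lévy tail and the frozen process $q(x_0,\cdot)$ in the role of the one with the \emph{smaller} tail. The uniform quasi-unimodality hypothesis of Theorem~\ref{tm3.10} holds because $\nu(du\times\R)$ is quasi-unimodal by~(ii) (the $x$-dependence being vacuous for a Lévy process), and the tail-domination hypothesis is precisely $\nu(B^{c}_u(0))\geq\nu(x_0,B^{c}_u(0))$ for $u\geq u_0$. As $\process{L}$ is recurrent, Theorem~\ref{tm3.10} delivers $\int_{B_r(0)}d\xi/q(x_0,\xi)=\infty$, which is \eqref{eq1.1}.

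For the transience assertion I would run the same argument with the direction of tail-domination reversed. Hypothesis~(i) now reduces \eqref{eq1.2} (which persists to all smaller $r$) to the transience of the frozen Lévy process, i.e.\ $\int_{B_r(0)}d\xi/q(x_0,\xi)<\infty$. This time the frozen process $q(x_0,\cdot)$ carries the \emph{larger} tail, by $\nu(x_0,B^{c}_u(0))\geq\nu(B^{c}_u(0))$ from~(iii), so it plays the role of the symbol $q$ in the transience half of Theorem~\ref{tm3.10}; that half requires the larger-tail symbol to satisfy \eqref{eq3.8}, and for a single Lévy process this is equivalent to $\int_{\R^{2}}|y|^{2}\nu(x_0,dy)=\infty$ (the remark following Theorem~\ref{tm3.7}), which is exactly the second half of~(ii). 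Transience of the smaller-tail process $\process{L}$ then propagates upward to $q(x_0,\cdot)$, giving \eqref{eq1.2}.

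The main difficulty is essentially bookkeeping: one must verify that the two degenerate Lévy-type processes obtained by freezing $q$ at $x_0$ and by viewing $\process{L}$ as constant-coefficient satisfy the standing radiality, quasi-unimodality and (for transience) \eqref{eq3.8} requirements of Theorem~\ref{tm3.10}, and one must keep the orientation straight so that the recurrent (resp.\ transient) hypothesis is always attached to the larger (resp.\ smaller) tail, ensuring the implication runs in the right direction. The two equivalent phrasings of the tail condition, in terms of $\nu(x,B^{c}_u(0))$ and of $\nu(x,(u,\infty)\times\R)$, are interchangeable via the elementary two-sided bound $\tfrac14\nu(x,B^{c}_{\sqrt2 u}(0))\leq\nu(x,(u,\infty)\times\R)\leq\nu(x,B^{c}_u(0))$ already used in the proof of Theorem~\ref{tm4}.
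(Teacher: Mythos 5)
Your proposal is correct and follows the paper's own route: the paper's proof is exactly the combination of the Chung--Fuchs criterion (to convert recurrence/transience of $\process{L}$ into the integral condition on $q(\xi)$) with the comparison Theorem~\ref{tm3.10}, applied to $\process{L}$ and the process frozen at $x_0$, with hypothesis~(i) identifying $q(x_0,\cdot)$ with $\sup_x q(x,\cdot)$ (resp.\ $\inf_x q(x,\cdot)$) near the origin. Your bookkeeping of which process carries the larger tail, and the reduction of \eqref{eq3.8} to $\int_{\R^{2}}|y|^{2}\nu(x_0,dy)=\infty$ in the constant-coefficient case, matches the intended argument.
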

\begin{proof} The claims of the corollary follow directly from \cite[Corollary
37.6]{sato-book} and Theorem \ref{tm3.10}.
\end{proof}

Examples of L\'evy-type processes which
satisfy the conditions in  Corollary
\ref{c3.11}
can be found in the class of
 Feller processes obtained by variable order subordination.
More precisely, let
$q(\xi)$ be the symbol  of some $d$-dimensional symmetric L\'evy process (that is, $q(\xi)= q(-\xi)$ for all $\xi\in\R^{d}$), and let $f :\R^{d}\times[0,
\infty)\longrightarrow[0,\infty)$ be a measurable function such that
$\sup_{x\in\R^{d}} f(x, t)\leq c(1 + t)$ for some $c\geq0$ and all
$t\in[0,\infty)$ and for fixed $x\in\R^{d}$ the function
$t\longrightarrow f(x, t)$ is a Bernstein function with $f(x, 0) =
0$. Bernstein functions are the characteristic Laplace exponents of
subordinators (L\'evy processes with nondecreasing sample paths).
For more on Bernstein functions we refer the readers to the
monograph   \cite{Bernstein}. Now, since $q(\xi)\geq0$ for all
$\xi\in\R^{d}$, the function
$$\bar{q}(x,\xi) := f(x,q(\xi)),\quad x,\xi\in\R^{d},$$ is well
defined  and, according to \cite[Theorem 5.2]{Bernstein} and
\cite[Theorem 30.1]{sato-book},  $\xi\longmapsto \bar{q}(x,\xi)$  is
a continuous and negative definite function  satisfying conditions
(\textbf{C2}) and (\textbf{C3}). Hence, $\bar{q}(x,\xi)$ is possibly
the symbol of some  L\'evy-type process. Of special interest is
the case when $f(x,t)=t^{\alpha(x)},$ where
$\alpha:\R^{d}\longmapsto [0,1]$, that is,  $\bar{q}(x,\xi)$
describes variable order subordination. For sufficient conditions on
the symbol $q(\xi)$ and function $\alpha(x)$ such that
$\bar{q}(x,\xi)$ is the symbol of some L\'evy-type process see
\cite{jacob} and \cite{hoh} and  the references therein. Now, assume that
$\alpha(\underline{x})=\inf_{x\in\R^{d}}\alpha(x)\leq\sup_{x\in\R^{d}}\alpha(x)=\alpha(\overline{x})$, for some $\overline{x},\underline{x}\in\R^{d}$.
Then, since the symbol $q(\xi)$ is continuous and $q(0)=0$, there
exists $r>0$ small enough such that $q(\xi)\leq1$ for all
$|\xi|<r.$ In particular, $$q^{\alpha(\overline{x})}(\xi)=\inf_{x\in\R^{d}}q^{\alpha(x)}(\xi)=\inf_{x\in\R^{d}}\bar{q}(x,\xi)\leq\sup_{x\in\R^{d}}\bar{q}(x,\xi)=\sup_{x\in\R^{d}}q^{\alpha(x)}(\xi)=q^{\alpha(\underline{x})}(\xi),\quad |\xi|<r.$$
Let us remark that
 when $q(\xi)$ is the symbol of a standard Brownian motion, then by
variable order subordination we get a  stable-like process.

 \section*{Acknowledgement} This work has been supported in part by the Croatian Science Foundation under Project 3526, NEWFELPRO Programme under Project 31 and  Dresden Fellowship Programme. The author
would like to thank the anonymous reviewer for careful reading of the paper
and for helpful comments that led to improvement of the
presentation.

\bibliographystyle{alpha}
\bibliography{References}

\begin{thebibliography}{BSW13}

\bibitem[Bas88]{bass-stablelike}
R.~F. Bass.
\newblock Uniqueness in law for pure jump {M}arkov processes.
\newblock {\em Probab. Theory Related Fields}, 79(2):271--287, 1988.

\bibitem[BC86]{Bass-Cranston-1986}
R.~F. Bass and M.~Cranston.
\newblock The {M}alliavin calculus for pure jump processes and applications to
  local time.
\newblock {\em Ann. Probab.}, 14(2):490--532, 1986.

\bibitem[BGT87]{goldie}
N.~H. Bingham, C.~M. Goldie, and J.~L. Teugels.
\newblock {\em Regular variation}.
\newblock Cambridge University Press, Cambridge, 1987.

\bibitem[B{\"o}t11]{bjoern-overshoot}
B.~B{\"o}ttcher.
\newblock An overshoot approach to recurrence and transience of {M}arkov
  processes.
\newblock {\em Stochastic Process. Appl.}, 121(9):1962--1981, 2011.

\bibitem[BSW13]{rene-bjorn-jian}
B.~B{\"o}ttcher, R.~L. Schilling, and J.~Wang.
\newblock {\em L\'evy matters. {III}}.
\newblock Springer, Cham, 2013.

\bibitem[Cou66]{courrege-symbol}
Ph. Courr\'{e}ge.
\newblock Sur la forme int\'{e}gro-diff\'{e}rentielle des op\'{e}rateurs de
  ${C}^{\infty}_{K}$ dans ${C}$ satisfaisant au principe du maximum.
\newblock {\em S\'{e}m. Th\'{e}orie du Potentiel}, expos\'{e} 2:38 pp.,
  1965-1966.

\bibitem[EJ07]{jacob}
K.~P. Evans and N.~Jacob.
\newblock Feller semigroups obtained by variable order subordination.
\newblock {\em Rev. Mat. Complut.}, 20(2):293--307, 2007.

\bibitem[Fra06]{franke-periodic}
B.~Franke.
\newblock The scaling limit behaviour of periodic stable-like processes.
\newblock {\em Bernoulli}, 12(3):551--570, 2006.

\bibitem[Fra07]{franke-periodicerata}
B.~Franke.
\newblock Correction to: ``{T}he scaling limit behaviour of periodic
  stable-like processes'' [{B}ernoulli 12 (2006), no. 3, 551--570].
\newblock {\em Bernoulli}, 13(2):600, 2007.

\bibitem[Hoh00]{hoh}
W.~Hoh.
\newblock Pseudo differential operators with negative definite symbols of
  variable order.
\newblock {\em Rev. Mat. Iberoamericana}, 16(2):219--241, 2000.

\bibitem[Ish01]{Ishikawa-2001}
Y.~Ishikawa.
\newblock Density estimate in small time for jump processes with singular
  {L}\'evy measures.
\newblock {\em Tohoku Math. J. (2)}, 53(2):183--202, 2001.

\bibitem[Jac01]{jacobI}
N.~Jacob.
\newblock {\em Pseudo differential operators and {M}arkov processes. {V}ol.
  {I}}.
\newblock Imperial College Press, London, 2001.

\bibitem[Jac05]{jacobIII}
N.~Jacob.
\newblock {\em Pseudo differential operators and {M}arkov processes. {V}ol.
  {III}}.
\newblock Imperial College Press, London, 2005.

\bibitem[KC99]{Kwoon-Lee-1999}
Y.~Kwon and Lee C.
\newblock Strong {F}eller property and irreducibility of diffusions with jumps.
\newblock {\em Stochastics}, 67(1-2):147--157, 1999.

\bibitem[KM14]{Knopova-Kulik-2014}
V.~P. Knopova and Kulik~A. M.
\newblock The parametrix method and the weak solution to an {SDE} driven by an
  $\alpha$-stable noise.
\newblock {\em Preprint. Available on arXiv: 1412.1732}, 2014.

\bibitem[Kol00]{Kolokoltsov-2000}
V.~N. Kolokoltsov.
\newblock Symmetric stable laws and stable-like jump-diffusions.
\newblock {\em Proc. Lond. Math. Soc. (3)}, 80(3):725--768, 2000.

\bibitem[KS12]{Knopova-Schilling-2012}
V.~P. Knopova and R.~L. Schilling.
\newblock Transition density estimates for a class of {L}\'evy and
  {L}\'evy-type processes.
\newblock {\em J. Theoret. Probab.}, 25(1):144--170, 2012.

\bibitem[KS13]{Knopova-Schilling-2013}
V.~P. Knopova and R.~L. Schilling.
\newblock A note on the existence of transition probability densities of
  {L}\'evy processes.
\newblock {\em Forum Math.}, 25(1):125--149, 2013.

\bibitem[Kul07]{Kulik-2007}
A.~M. Kulik.
\newblock Stochastic calculus of variations for general {L}\'evy processes and
  its applications to jump-type {SDE}’s with non-degenerated drift.
\newblock {\em Preprint. Available on arXiv: 0606427}, 2007.

\bibitem[Pic96]{Picard-1996}
J.~Picard.
\newblock On the existence of smooth densities for jump processes.
\newblock {\em Probab. Theory Related Fields}, 105(4):481--511, 1996.

\bibitem[Pic10]{Picard-Erratum-2010}
J.~Picard.
\newblock Erratum to: {O}n the existence of smooth densities for jump
  processes.
\newblock {\em Probab. Theory Related Fields}, 147(3-4):711--713, 2010.

\bibitem[PS15]{Pang-Sandric-2015}
G.~Pang and N.~Sandri\'{c}.
\newblock Ergodicity and fluctuations of a fluid particle driven by diffusions
  with jumps.
\newblock {\em To appear in: Commun. Math. Sci. Available on arXiv:
  1502.04440}, 2015.

\bibitem[RW00]{Rogers-Williams-Book-I-2000}
L.~C.~G. Rogers and D.~Williams.
\newblock {\em Diffusions, {M}arkov processes, and martingales. {V}ol. 1}.
\newblock Cambridge University Press, Cambridge, 2000.

\bibitem[San13]{sandric-spa}
N.~Sandri{\'c}.
\newblock Long-time behavior of stable-like processes.
\newblock {\em Stochastic Process. Appl.}, 123(4):1276--1300, 2013.

\bibitem[San14a]{sandric-TAMS}
N.~Sandri\'{c}.
\newblock Long-time behavior for a class of {F}eller processes.
\newblock {\em To appear in: Trans. Am. Math. Soc. Available on arXiv:
  1401.2646}, 2014.

\bibitem[San14b]{Sandric-JOTP-2014}
N.~Sandri\'{c}.
\newblock Recurrence and transience criteria for two cases of stable-like
  {M}arkov chains.
\newblock {\em J. Theoret. Probab.}, 27(3):754--788, 2014.

\bibitem[San15]{sandric-transience}
N.~Sandri\'{c}.
\newblock On transience of {L}\'evy-type processes.
\newblock {\em Preprint}, 2015.

\bibitem[Sat99]{sato-book}
K.~Sato.
\newblock {\em L\'evy processes and infinitely divisible distributions}.
\newblock Cambridge University Press, Cambridge, 1999.

\bibitem[Sch98a]{rene-conserv}
R.~L. Schilling.
\newblock Conservativeness and extensions of feller semigroups.
\newblock {\em Positivity}, 2:239--256, 1998.

\bibitem[Sch98b]{rene-holder}
R.~L. Schilling.
\newblock Growth and {H}\"older conditions for the sample paths of {F}eller
  processes.
\newblock {\em Probab. Theory Related Fields}, 112(4):565--611, 1998.

\bibitem[She91]{Sheu-1991}
S.-J. Sheu.
\newblock Some estimates of the transition density of a nondegenerate diffusion
  {M}arkov process.
\newblock {\em Ann. Probab.}, 19(2):538--561, 1991.

\bibitem[SSV12]{Bernstein}
R.~L. Schilling, R.~Song, and Z.~Vondra{\v{c}}ek.
\newblock {\em Bernstein functions}.
\newblock Walter de Gruyter \& Co., Berlin, second edition, 2012.

\bibitem[ST97]{Stramer-Tweedie-1997}
O.~Stramer and R.~L. Tweedie.
\newblock Existence and stability of weak solutions to stochastic differential
  equations with non-smooth coefficients.
\newblock {\em Statist. Sinica}, 7(3):577--593, 1997.

\bibitem[SW13]{rene-wang-feller}
R.~L. Schilling and J.~Wang.
\newblock Some theorems on {F}eller processes: transience, local times and
  ultracontractivity.
\newblock {\em Trans. Amer. Math. Soc.}, 365(6):3255--3286, 2013.

\bibitem[Twe94]{tweedie-mproc}
R.~L. Tweedie.
\newblock Topological conditions enabling use of {H}arris methods in discrete
  and continuous time.
\newblock {\em Acta Appl. Math.}, 34(1-2):175--188, 1994.

\end{thebibliography}

\end{document}